\newtheorem{Theorem}{Theorem}[section]
\newtheorem{Lemma}[Theorem]{Lemma}
\newtheorem{Proposition}[Theorem]{Proposition}
\newtheorem{Corollary}[Theorem]{Corollary}
\newtheorem{Remark}[Theorem]{Remark}
\numberwithin{equation}{section}
\newcommand{\lc}
{\mathrel{\raise2pt\hbox{${\mathop<\limits_{\raise1pt\hbox
{\mbox{$\sim$}}}}$}}}
\newcommand{\gc}
{\mathrel{\raise2pt\hbox{${\mathop>\limits_{\raise1pt\hbox{\mbox{$\sim$}}}}$}}}
\newcommand{\ec}
{\mathrel{\raise2pt\hbox{${\mathop=\limits_{\raise1pt\hbox{\mbox{$\sim$}}}}$}}}
\def \lg {\langle}
\def \rg {\rangle}
\def \R {\mathbb R}
\begin{document}

\title{\textbf{Observability inequalities from measurable sets   for some evolution equations}}

\author{
Gengsheng Wang
\thanks{
School of Mathematics and Statistics, Wuhan University, Wuhan,
430072, China (wanggs62@yeah.net). The authors are partially supported by the National Natural Science Foundation of China under grants
11161130003 and 11171264 and by the National Basis Research Program of China (973 Program) under grant 2011CB808002.
}
\quad \quad
Can Zhang\thanks{ School of Mathematics and Statistics, Wuhan
University, Wuhan, 430072, China (zhangcansx@163.com). }
}

\date{}
\maketitle
\begin{abstract}
In this paper, we build up two  observability inequalities
from measurable sets in time for some evolution equations in Hilbert spaces
 from two different settings. The equation reads:  $u'=Au,\; t>0$, and the observation operator is denoted by $B$.
 In the first setting, we assume that $A$ generates an analytic semigroup,
 $B$ is an admissible observation operator for this semigroup (cf. \cite{TG}), and the pair $(A,B)$ verifies some observability inequality from time intervals.
 With the help of the  propagation estimate of analytic functions
 (cf. \cite{V}) and  a telescoping series method provided in the current paper,
  we establish an observability inequality
from measurable sets in time. In the second setting, we suppose that
$A$ generates a $C_0$ semigroup, $B$ is a linear and bounded
operator, and the pair $(A, B)$ verifies some spectral-like
condition. With the aid of methods developed in \cite{AEWZ} and
\cite{PW2} respectively, we first obtain an interpolation inequality
at one time,
 and  then derive an  observability inequality from measurable sets in time.
 These two observability inequalities are applied to get the bang-bang property for some time optimal control problems.

\bigskip

\noindent\textbf{Key words.} Evolution equations in Hilbert spaces,
observability inequality in measurable sets,  telescoping series
method, propagation estimate of analytic functions, bang-bang
property of time optimal controls
\bigskip

\noindent\textbf{AMS Subject Classifications. 93B07, 93C25}
\end{abstract}



\section{Introduction and main results}
The aim of this study is to present an observability inequality from
measurable sets in time for some parabolic-like evolution equations.
Such an  estimate was built up for the heat equation in
\cite{gengshengwang1} and was established for heat equations with
lower order terms depending on both space and time variables $x$ and $t$
in \cite{PW2}.  To the best of our  knowledge,  it has not been
touched upon for abstract evolution equations so far.

We start with introducing  the evolution equation under study:
\begin{equation}\label{a-eq1}
\frac{du}{dt}=Au, \;\;t>0,\;\;\;u(0)=u_0\in X,
\end{equation}
where $X$ is a Hilbert space and $A:D(A)\subset X\rightarrow X$ is
the infinitesimal generator of a $C_0$ semigroup $\{S(t);t\geq0\}$
in $X$. Denote by  $\lg \cdot,\cdot\rg_X$ and $\|\cdot\|_X$  the
inner product and the norm of $X$ respectively, and endow the space
$D(A)$ with the graph norm.

We next introduce an  observation operator $B: X\rightarrow U$ from
two cases. Here $U$ is another Hilbert space with the inner product
$\lg \cdot,\cdot\rg_U $ and the norm  $\|\cdot\|_U$. For each Banach
space $Z$, $\mathcal{L}(Z,U)$  stands for the space of all linear
bounded operators from $Z$ to $U$, with the usual norm
$\|\cdot\|_{\mathcal{L}(Z,U)}$. In the first case, we let
$B\in\mathcal L(D(A),U)$ hold the following two properties:

\noindent $(a)$ $B$ is  an admissible observation operator for $\{S(t);t\geq0\}$, i.e., for each $\tau>0$,
there exists a positive constant $C(\tau)$ such that
\begin{equation}\label{admi}
\int_0^\tau\|BS(t)u_0\|_U^2\,dt\leq C(\tau)\|u_0\|_X^2\;\;\text{for all}\;
\;u_0\in D(A).
\end{equation}

\noindent $(b)$ The pair $(A,B)$ verifies the
 observability inequality from time intervals:
{\it There are two positive constants $d$ and $k$ such that for
 any $L\in(0,1]$,
\begin{equation}\label{wang1.3}
\|S(L)u_0\|_X^2 \leq
e^{\frac{d}{L^k}}\int_{0}^{L}\|BS(t)u_0\|_U^2\,dt\;\;\text{for all}\;\;
u_0\in D(A).
\end{equation}}
Here and throughout this paper,  $C(\cdots)$ denotes a positive
constant depending on what are inclosed in the brackets, and may
vary in different contexts.
 Our definition of admissible
observation operators is quoted  from \cite[Chapter 4]{TG}.
  For more  details on the
above-mentioned inequality (\ref{wang1.3}), we refer the
 readers to \cite[Chapter 2]{Coron} or
\cite[Chapter 6]{TG}. In the second case, we let $B\in\mathcal
L(X,U)$ be such that  the pair $(A,B)$ verifies the  Hypothesis
$(H)$: {\it There is a family of increasing subspaces $\{\mathbb
E_{\lambda_m}\}_{m\geq1}$ of $X$,  with
$$
0<\lambda_1\leq\lambda_2\leq\cdots\leq\lambda_m\rightarrow+\infty,
$$
 verifying \\

\noindent$(i)$ for each $m\in \mathbb{N}$,  $S(t)\mathbb E_{\lambda_m}\subset \mathbb E_{\lambda_m}$ for all $t\geq 0$;\\

\noindent$(ii)$  there is a  constant $\mu>0$ such that for each
$m\in \mathbb{N}$,
$$
\|S(t)g\|_X\leq e^{-\mu t{\lambda_m}}\|g\|_X\;\;\mbox{for all}\;\;
g\in \mathbb E_{\lambda_m}^\bot\;\;\mbox{and}\;\; t>0;
$$

\noindent$(iii)$  there are constants $\gamma\in(0,1)$ and $N\geq1$
such that for each $m\in \mathbb{N}$,
$$
\|f\|_X\leq Ne^{N\lambda_m^\gamma}\|Bf\|_{U}\;\;\mbox{for all}\;\;
f\in\mathbb E_{\lambda_m}.
$$ }
Here, $\mathbb E_{\lambda_m}^\bot$ is the orthogonal complementary
subspace to $\mathbb E_{\lambda_m}$ in $X$.
 We refer to \cite{Seidman-2008} or \cite{TT1} for a similar hypothesis condition to $(H)$.

The main results of this paper are included in the following two
theorems.

\begin{Theorem}\label{main}
Let $A$ generate an analytic semigroup $\{S(t);t\geq0\}$ in $X$ and $B\in\mathcal L(D(A),U)$ verify the admissible observation condition \eqref{admi}.
 Assume that $(A,B)$ holds the observability inequality \eqref{wang1.3}.
 Then, given  $T>0$ and a subset $E\subset(0,T)$ of positive  measure,
 there exists a positive constant $C=C(E,T,d,k, \|B\|_{\mathcal L(D(A),U)})$
such that
\begin{equation}\label{403c8}
\|S(T)u_0\|_X\leq C\int_E\|BS(t)u_0\|_U\,dt\;\;\;\mbox{for all}\;\;
u_0\in D(A).
\end{equation}
\end{Theorem}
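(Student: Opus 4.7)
My plan follows the density-point-and-telescoping strategy suggested by the introduction, in the spirit of \cite{gengshengwang1} and \cite{PW2}.

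\emph{Step 1 (density-point setup).} Fix a Lebesgue density point $\ell\in(0,T)$ of $E$, and build a decreasing sequence $\ell_m\downarrow\ell$ by a geometric rule $\ell_{m+1}-\ell=\rho(\ell_m-\ell)$ for some $\rho\in(0,1)$, with $\ell_1\in(\ell,T)$ chosen so small that $\ell_1-\ell\le1$ and the density ratio $|E\cap(\ell,\ell_m)|/(\ell_m-\ell)$ is close to $1$ for all $m$ large. Set $I_m=[\ell_{m+1},\ell_m]$ and $L_m=\ell_m-\ell_{m+1}=(1-\rho)\rho^{m-1}(\ell_1-\ell)$.

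\emph{Step 2 (observation on each small interval).} Applying the hypothesis \eqref{wang1.3} to the initial state $S(\ell_{m+1})u_0\in D(A)$ with observation time $L_m\in(0,1]$ yields
\begin{equation*}
\|S(\ell_m)u_0\|_X^{2}\le e^{d/L_m^{k}}\int_{I_m}\|BS(t)u_0\|_U^{2}\,dt .
\end{equation*}
Since the semigroup is analytic, $t\mapsto BS(t)u_0$ admits a holomorphic extension to a sector around $(0,\infty)$, with norm on that sector controlled by combining the admissibility bound \eqref{admi} and analytic estimates of $\|AS(t)u_0\|_X$. The propagation inequality of \cite{V} then lets me replace the integral over $I_m$ by one over $E\cap I_m$, at the price of a Vessella factor of the type $\exp(K/L_m^{\sigma})$ and an interpolation exponent $\theta_m\in(0,1)$:
\begin{equation*}
\int_{I_m}\|BS(t)u_0\|_U^{2}\,dt\le e^{K/L_m^{\sigma}}\,M_m^{2(1-\theta_m)}\left(\int_{E\cap I_m}\|BS(t)u_0\|_U^{2}\,dt\right)^{\theta_m},
\end{equation*}
where $M_m$ is the sup of $\|BS(z)u_0\|_U$ over the complex neighbourhood of $I_m$, itself estimable by $C\|u_0\|_X$ thanks to admissibility and the smoothing property of the analytic semigroup.

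\emph{Step 3 (telescoping and conclusion).} Combining the two inequalities, then applying Young's inequality with a free parameter, I obtain a recursion of the form
\begin{equation*}
\|S(\ell_m)u_0\|_X^{2}\le \varepsilon_m\|S(\ell_{m+1})u_0\|_X^{2}+C_m\int_{E\cap I_m}\|BS(t)u_0\|_U^{2}\,dt ,
\end{equation*}
where $\varepsilon_m$ can be tuned so that $\prod_m\varepsilon_m$ converges geometrically and the sum $\sum_m C_m\int_{E\cap I_m}$ is dominated by $C(E,T,d,k)\int_E\|BS(t)u_0\|_U^{2}\,dt$. Summing the recursion and using $S(\ell_m)u_0\to S(\ell)u_0$ in $X$ gives an $L^{2}$-version of the inequality at time $\ell$; since $S(T-\ell)\in\mathcal L(X)$, this transfers to time $T$. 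Finally, the argument above actually produces a bound by $\sup_F\|BS(t)u_0\|_U$ for every subset $F\subset E$ of positive measure (obtained by replacing $E$ by $F$ throughout), and a standard Lebesgue-differentiation argument at a density point of the function $t\mapsto\|BS(t)u_0\|_U$ upgrades this to the $L^{1}$-form stated in \eqref{403c8}.

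\emph{Main obstacle.} The delicate point is the balance in Step~3: the factor $e^{d/L_m^{k}}$ from \eqref{wang1.3} and the Vessella factor $e^{K/L_m^{\sigma}}$ both blow up as $m\to\infty$, whereas the density ratio $|E\cap I_m|/|I_m|$ only approaches~$1$ subquantitatively. The telescoping parameters $\rho$ and $\varepsilon_m$ must be chosen carefully so that the geometric decay of $\varepsilon_m$ overwhelms these exponential losses; getting this quantitative trade-off correct, and keeping track of the dependence of all constants on $E$, $T$, $d$, $k$, and $\|B\|_{\mathcal L(D(A),U)}$, is the technical heart of the proof.
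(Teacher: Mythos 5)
Your overall strategy (Lebesgue density point, Vessella-type propagation of smallness on each small interval, Young's inequality, telescoping) is the same as the paper's, but two steps that you leave as black boxes are precisely where the proof lives, and as you have sketched them they do not close. First, the telescoping balance in Step~3: after Young's inequality the recursion is $a_m\le\varepsilon_m a_{m+1}+\varepsilon_m^{-(1-\theta)/\theta}e^{C/L_m^k}J_m$ with $J_m=\int_{E\cap I_m}\|Bu\|_U\,dt$, and iterating it produces cumulative factors $\prod_{j<m}\varepsilon_j$ multiplying coefficients $\varepsilon_m^{-(1-\theta)/\theta}e^{C/L_m^k}$ that blow up doubly exponentially; no generic ``geometric decay of $\varepsilon_m$'' overwhelms this. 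The paper's resolution is an exact cancellation, not a domination argument: take $\varepsilon=e^{-1/L_m^k}$, multiply through by $\varepsilon^{1-\theta}e^{-C/L_m^k}$ to write each step as a difference of two weighted norms, and then choose the ratio $q=\bigl(\frac{C+1-\theta}{C+1}\bigr)^{1/k}$ of consecutive interval lengths so that the weight of $\|u(\ell_{m+1})\|_X$ at step $m$ equals the weight of $\|u(\ell_{m+1})\|_X$ at step $m+1$, making the series telescope identically. This requires the interpolation exponent $\theta$ to be uniform in $m$, which holds because the density-point construction of \cite[Proposition 2.1]{PW2} guarantees $|E\cap(\ell_{m+1},\ell_m)|\ge(\ell_m-\ell_{m+1})/3$ for every $m$ with the same ratio $1/3$; your $\theta_m$ must not be allowed to vary.

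Second, your passage from the $L^2$ bound to the $L^1$ bound fails. The constant in the observability inequality over a set $F$ depends on $F$ (through its measure and its density structure around the chosen density point), so you cannot run the argument ``for every subset $F\subset E$'' with a uniform constant and then shrink $F$ by Lebesgue differentiation. The paper gets the $L^1$ norm already inside the interpolation inequality on each interval $[t_1,t_2]$: after Vessella one has a bound involving $\bigl(\int_{\hat E}\|Bu\|_U^2\bigr)^{\vartheta}\le\bigl(\max_{[\tau,t_2]}\|Bu\|_U\bigr)^{\vartheta}\bigl(\int_{\hat E}\|Bu\|_U\bigr)^{\vartheta}$, and the max is absorbed into $\|u(t_1)\|_X$ via the analytic smoothing estimate $\|Bu(t)\|_U\le C(t_2-t_1)^{-1}\|u(t_1)\|_X$. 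Relatedly, to apply Vessella's lemma you need quantitative real-analyticity bounds for the scalar function $g(t)=\|BS(t)u_0\|_U^2$ (the norm of a holomorphic vector-valued function is not itself holomorphic, so ``extend $BS(\cdot)u_0$ to a sector'' is not directly the right object); the paper derives $|g^{(\beta)}(t)|\le K(t-s)^{-2}\beta!\,(\rho(t-s))^{-\beta}\|u(s)\|_X^2$ from the binomial formula and $\|S^{(m)}(t)\|\le(Cm/t)^m$, and you would need an equivalent statement.
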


\begin{Theorem}\label{bing1} Let $A$ generate a $C_0$ semigroup $\{S(t);t\geq0\}$ in $X$ and $B\in\mathcal
L(X,U)$. Assume that $(A,B)$ satisfies the Hypothesis $(H)$. Then
the following estimates hold:

\noindent$(\textrm{I})$\;There exists a constant $C=C(N,\mu,
\|B\|_{\mathcal L(X,U)},\lambda_1)\geq1$ such that when $t\in(0,1]$,
\begin{equation}\label{hong1}
\|S(t)u_0\|_X\leq
\Big(C\exp\big(Ct^{-\frac{\gamma}{1-\gamma}}\big)\|BS(t)u_0
\|_U\Big)^{\frac{1}{2}}\|u_0\|_X^{\frac12}\;\;\mbox{for all}\;\;
u_0\in X.
\end{equation}

\noindent$(\textrm{II})$\; Given $T>0$ and a subset $E\subset(0,T)$
 of positive measure, there is a  constant
$C=C(E, T, N,\mu, \|B\|_{\mathcal L(X,U)},\lambda_1,\gamma)$ such
that
\begin{equation}\label{hong2}
\|S(T)u_0\|_X\leq C\int_E\|BS(t)u_0\|_{U}\,dt\;\;\mbox{for all}\;\;
u_0\in X.
\end{equation}
\end{Theorem}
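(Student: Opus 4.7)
For Part (I), I will carry out a Lebeau--Robbiano style spectral split adapted to Hypothesis $(H)$. Letting $P_m$ be the orthogonal projection onto $\mathbb E_{\lambda_m}$, I write $S(t)u_0=S(t)P_m u_0+S(t)(I-P_m)u_0$. Invariance $(i)$ places $S(t)P_m u_0$ in $\mathbb E_{\lambda_m}$, so the spectral inequality $(iii)$ yields $\|S(t)P_m u_0\|_X\leq Ne^{N\lambda_m^\gamma}\|BS(t)P_m u_0\|_U$. Using $BS(t)P_m u_0=BS(t)u_0-BS(t)(I-P_m)u_0$, boundedness of $B$, and the decay $(ii)$ applied to $(I-P_m)u_0\in\mathbb E_{\lambda_m}^\bot$, I reach
\[
\|S(t)u_0\|_X\leq Ne^{N\lambda_m^\gamma}\|BS(t)u_0\|_U+C_1e^{N\lambda_m^\gamma-\mu t\lambda_m}\|u_0\|_X.
\]
For any $m$ with $\lambda_m\geq(2N/(\mu t))^{1/(1-\gamma)}$ the second exponent is $\leq-\tfrac12\mu t\lambda_m$. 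Next I pick such an $m$ also satisfying $C_1e^{-\mu t\lambda_m/2}\|u_0\|_X\leq\tfrac12\|S(t)u_0\|_X$, which forces $\mu t\lambda_m\sim 2\log(\|u_0\|_X/\|S(t)u_0\|_X)$. Substituting into $Ne^{N\lambda_m^\gamma}$ and maximizing $-L+C_4(L+O(1))^\gamma/t^\gamma$ in $L=\log(\|u_0\|_X/\|S(t)u_0\|_X)$ (whose maximum equals $C_5 t^{-\gamma/(1-\gamma)}+O(1)$) produces the interpolation inequality \eqref{hong1}.

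For Part (II), I combine \eqref{hong1} with a telescoping argument at a Lebesgue density point $l\in(0,T)$ of $E$. Set $\ell=\min\{T-l,1\}$, fix $r\in(2^{-(1-\gamma)/\gamma},1)$, and define $l_m=l+r^m\ell$ together with $J_m=((l_m+l_{m+1})/2,\,l_m)$. Applying \eqref{hong1} to $S(l_{m+1})u_0$ at the intermediate time $t-l_{m+1}$ for $t\in J_m$, using the uniform bound $\|S(l_m)u_0\|_X\leq M_0\|S(t)u_0\|_X$ with $M_0=\sup_{s\in[0,T]}\|S(s)\|_{\mathcal L(X)}$, and integrating over $J_m\cap E$ gives
\[
a_m^2\leq D_m\,a_{m+1}\,b_m,\qquad a_m:=\|S(l_m)u_0\|_X,\ \ b_m:=\int_{J_m\cap E}\|BS(t)u_0\|_U\,dt,
\]
where $D_m\leq C_2 r^{-m}\exp(C_3 r^{-m\gamma/(1-\gamma)})$ and $|J_m\cap E|\geq cr^m\ell$ for all $m$ past some index $M$, by the density of $l$. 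Taking logarithms of $a_m^2\leq D_m b_m a_{m+1}$, iterating, and noting $2^{-N}\log a_N\to 0$ (since $a_N\to\|S(l)u_0\|_X$ by $C_0$-continuity) yields $a_M\leq\prod_{m\geq M}(D_m b_m)^{1/2^{m-M+1}}$. Weighted AM--GM with weights summing to one gives $\prod b_m^{1/2^{m-M+1}}\leq\sum 2^{-(m-M+1)}b_m\leq\int_E\|BS(t)u_0\|_U\,dt$ (using pairwise disjointness of the $J_m$ inside $(l,T)$), while $K:=\prod D_m^{1/2^{m-M+1}}$ is a finite constant. The bound $\|S(T)u_0\|_X\leq M_0 a_M$ then yields \eqref{hong2}.

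The main obstacle is the convergence of $\prod D_m^{1/2^{m-M+1}}$: because $\log D_m$ grows like $r^{-m\gamma/(1-\gamma)}$ (so $D_m$ is doubly exponential in $m$) while the AM--GM weights decay only as $2^{-m}$, the series $\sum 2^{-m}r^{-m\gamma/(1-\gamma)}$ is geometric with ratio $(2r^{\gamma/(1-\gamma)})^{-1}$, strictly less than one precisely when $r>2^{-(1-\gamma)/\gamma}$. This is the step where the exponent $\gamma/(1-\gamma)$ from \eqref{hong1} must align with the geometric spacing of the $l_m$; verifying this and tracking all dependencies delivers the quantitative form of the constant claimed in Theorem~\ref{bing1}.
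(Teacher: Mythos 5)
For Part (I) your decomposition is exactly the paper's: split $u_0$ by the orthogonal projection onto $\mathbb E_{\lambda}$, apply property $(iii)$ to the low modes and property $(ii)$ to the high modes, and arrive at
$\|S(t)u_0\|_X\leq Ne^{N\lambda^\gamma}\|BS(t)u_0\|_U+C_1e^{N\lambda^\gamma-\mu t\lambda}\|u_0\|_X$. The difference is the endgame, and that is where you have a genuine gap. You propose to choose a specific index $m$ so that $\mu t\lambda_m\sim 2\log(\|u_0\|_X/\|S(t)u_0\|_X)$; but Hypothesis $(H)$ only assumes $\lambda_m\uparrow+\infty$ with no control on the gaps, so the smallest $\lambda_m$ exceeding your ($u_0$-dependent) threshold may be arbitrarily much larger than the threshold, and then $e^{N\lambda_m^\gamma}$ is not dominated by $e^{Ct^{-\gamma/(1-\gamma)}}(\|u_0\|_X/\|S(t)u_0\|_X)^{O(1)}$; the asserted "$\sim$" is unjustified. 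The paper avoids committing to a frequency cutoff tied to $u_0$: it keeps $\lambda$ as a free parameter, rewrites the two-term bound as $Ke^{Kt^{-\gamma/(1-\gamma)}}(\varepsilon^{-1}\|BS(t)u_0\|_U+\varepsilon\|u_0\|_X)$ with $\varepsilon=e^{-\mu\lambda t/2}$, extends the range of $\varepsilon$ to all of $(0,\infty)$ using the trivial bound $\|S(t)u_0\|_X\leq M\|u_0\|_X$, and only then minimizes in $\varepsilon$ to produce \eqref{hong1}. (Even that step tacitly treats $\lambda$ as a continuum, via the definition $\mathbb E_\lambda=\bigcup_{\lambda_k\leq\lambda}\mathbb E_{\lambda_k}$; if you adopt that reading, or add a bounded-gap assumption on $\{\lambda_m\}$, your selection can be repaired, but as written the step fails under the literal hypothesis.)

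Part (II) is correct (granting (I)) and is a genuinely different iteration from the paper's. The paper linearizes the multiplicative recursion $a_m^2\leq D_mb_ma_{m+1}$ with Young's inequality, chooses $\varepsilon=\exp\big(-\tfrac12(\ell_m-\ell_{m+1})^{-\gamma/(1-\gamma)}\big)$, and fixes the geometric ratio $q=\big(\tfrac{N+1/2}{N+1}\big)^{(1-\gamma)/\gamma}$ so that consecutive exponential weights coincide and the sum telescopes exactly; no infinite product ever appears. You instead keep the recursion multiplicative, take logarithms, iterate with dyadic weights, and control $\prod_m D_m^{2^{-(m-M+1)}}$ by requiring $2r^{\gamma/(1-\gamma)}>1$. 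You have correctly identified that this convergence condition is the precise analogue of the paper's choice of $q$ --- it is where the exponent $\gamma/(1-\gamma)$ from \eqref{hong1} must match the spacing of the $\ell_m$ --- and your use of weighted AM--GM plus disjointness of the $J_m$ to recover $\int_E\|BS(t)u_0\|_U\,dt$ is sound. Two small points to tidy: the boundary term $2^{-(N-M)}\log a_N$ must also be handled when $S(\ell)u_0=0$ (then $\log a_N\to-\infty$, which only strengthens the bound, but say so), and similarly $b_m=0$ forces $a_m=0$ so the logarithmic manipulation degenerates harmlessly. Your scheme also only needs the Lebesgue-density lower bound $|J_m\cap E|\geq cr^m\ell$ for large $m$, whereas the paper arranges it for all $m$ via \cite[Proposition 2.1]{PW2}; this buys you a slightly more self-contained construction at the cost of an extra multiplicative constant from the first $M$ steps.
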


 Several remarks are given in order:

 $(1)$  Theorem~\ref{main} can be
applied to get the null controllability from measurable  sets in
time for several important equations: the internally controlled
Stokes equations; the internally controlled degenerate parabolic
equations associated with the Grushin operator in dimension 2; the
boundary controlled heat equations, and so on. More importantly,
with the aid of Theorem~\ref{main}, we can build up the bang-bang
property of time optimal control problems for the above-mentioned
controlled equations. This property is extremely important in the
studies of time optimal control problems (cf., e.g.,  \cite{LW1}, \cite{LW2}, \cite{phungwangzhang}, \cite{PWZ}, \cite{WX},
\cite{WZ}, \cite{Yu}). These applications will be presented in
Section 3 of this paper. It is worth mentioning that for the first
two equations above-mentioned, the corresponding observability
inequality  (\ref{wang1.3}) was built up in \cite{CG} and \cite{BCG}
respectively; while for the last equation, it was provided in
\cite{TG}.

\vskip 5pt
 $(2)$ The inequality (\ref{hong1}) is a quantitative
unique continuation estimate at one time, while the inequality
(\ref{hong2}) is an observability estimate from measurable sets in
time. They have been studied for  heat equations with lower order terms
depending on both space and time variables $x$ and $t$ in
\cite{PW1}, \cite{PW2} and \cite{PWZ}. We derive the estimate
(\ref{hong2}) from the inequality (\ref{hong1}), via the method
provided in \cite{PW2}. In the case where $U=X$ and $B=I$ (the
identity operator on $X$), one can directly check that
$$
\|S(t)u_0\|_X\leq
(C\|S(t)u_0\|_X)^{\frac{1}{2}}\|u_0\|_X^{\frac{1}{2}}\;\;\mbox{for
all}\;\; u_0\in X, \; t\in(0, 1],
$$
which leads to (\ref{hong1}). Consequently, (\ref{hong2}) holds.
Hence, the assumption $(H)$ is not necessary in this case. From
(\ref{hong2}), the bang-bang property for the corresponding time
optimal control problem  follows.  Such property for this special
case was first established in \cite{Fa1964} by a different way.

\vskip 5pt

 $(3)$ Consider the more general evolution equation:
$$
\frac{du}{dt}=A(t)u,\; t>0,\;\;u(0)=u_0,
$$
where $A(\cdot)$ verifies certain conditions such that the above
equation is well-posed and the solution is analytic in time  (cf.
\cite[Part 3, Theorem 2.2]{Friedman}, \cite[Chapter 5]{Pa}). It
seems for us  that one can  get a similar estimate to (\ref{403c8})
for the aforementioned time-varying equation, through utilizing a
similar method  to that used in the proof of Theorem~\ref{main}.

\vskip 5pt

$(4)$ We call the  inequality  (\ref{wang1.3})  an
$L^2$-observability inequality  on time intervals, since the
integral on its right hand side is the $L^2(0,T; U)$-norm of
$BS(\cdot)u_0$. Sometimes, we prefer such estimate with the
$L^2$-norm replaced by the $L^1$-norm. The later is called the
$L^1$-observability inequality on time intervals. In Section 2, we
provide a telescoping series method, by which one can derive the
$L^1$-observability inequality on time intervals from the
$L^2$-observability inequality on time intervals.

$(5)$ Observability inequalities from time intervals for linear
parabolic equations, which grows like (\ref{wang1.3}), have been
studied in many publications (cf.,  e.g., \cite{BARBU}, \cite{DZZ},
\cite{Fernandez-CaraZuazua1}, \cite{FursikovOImanuvilov}, \cite{G.
LebeauL. Robbiano}, \cite{M1}, \cite{RR} and the references
therein). Recently, the observability inequality from measurable
sets of positive measure for the heat equation has been established
in \cite{AE}, \cite{AEWZ} and \cite{can1} (with the help of
a propagation estimate of smallness for analytic functions). For some
general parabolic equations (or systems) with time-independent and
analytic coefficients, we refer the reader to \cite{EMZ}.

The rest of this paper is organized as follows. Section 2 is devoted
to the proofs of Theorems~\ref{main} and \ref{bing1}. Section 3 presents some applications of Theorems~\ref{main} and \ref{bing1}
to time optimal control problems.

\bigskip

\noindent\textbf{Notation}.
For each measurable set $E\subset\mathbb R^n$, $\chi_E$ and $|E|$ stand for the  characteristic function
and the Lebesgue measure of the set, respectively.
For a smooth function $g:\mathbb R\rightarrow \mathbb R$, we write $g^{(\beta)}$, $\beta\in\mathbb N$,
 for the $\beta$-th order derivative. Sometimes we also write $e^{tA}$ for the semigroup  generated by  $A$, instead of $\{S(t);\; t\geq 0\}$. Write $\mathbb{R}^+$ for the interval $[0, \infty)$. Denote by $A^*$ and $B^*$ the adjoint operators of $A$ and $B$ respectively.
 Write $D(A)$ and $D(A^*)$ for the domains of $A$ and $A^*$ respectively.

\bigskip
\section{Proofs of Theorems~\ref{main} and \ref{bing1}}
In this section, we first prove Theorem~\ref{main} and
Theorem~\ref{bing1} respectively, and then introduce a telescoping
series method, by which one can derive the $L^1$-observability
inequality on time intervals from the $L^2$-observability inequality
on time intervals.

\subsection{The proof of Theorem~\ref{main} }
The proof of Theorem~\ref{main} is based on several  lemmas. The
first one concerns  with an analyticity property of the function:
\begin{equation}\label{def}
g(t; u_0)\triangleq\|BS(t)u_0\|_U^2, \;\;t>0,
\end{equation}
where $\{S(t);t\geq0\}$ is an analytic semigroup with the generator $A$, $u_0\in D(A)$ and
 $B\in\mathcal L(D(A),U)$.

\begin{Lemma}\label{a-prop1}
For each $u_0\in D(A)$, the function $g(\cdot)\triangleq g(\cdot;u_0)$ is analytic in $(0,+\infty)$. Furthermore, there are constants $K\geq1$ and $\rho\in(0,1)$ independent of $u_0$
such that
\begin{equation*}
\big|g^{(\beta)}(t)\big|\leq K\frac{(t-s)^{-2}\beta!}{\big(\rho
(t-s)\big)^\beta}\|u(s)\|_X^2 \;\;\mbox{for all}\;\; \beta\in\mathbb
N,
\end{equation*}
when $0<t-s\leq1$, where $u(\cdot)\triangleq S(\cdot)u_0$.
\end{Lemma}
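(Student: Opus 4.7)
The plan is to deduce the estimate by combining the standard derivative bounds for analytic semigroups with a Leibniz expansion of the square $\|BS(t)u_0\|_U^2$. First I would establish analyticity: since $\{S(t)\}$ is analytic, $t\mapsto S(t)u_0$ extends to a holomorphic $X$-valued function on a sector about $(0,\infty)$, and for $t>0$ it takes values in $D(A^n)$ for every $n$. Because $B\in\mathcal L(D(A),U)$, the composite $t\mapsto BS(t)u_0$ is a holomorphic $U$-valued function on $(0,\infty)$, and consequently $g(t)=\lg BS(t)u_0,BS(t)u_0\rg_U$ is real-analytic on $(0,\infty)$. Moreover, the chain rule together with analyticity gives the identity $(BS(t)u_0)^{(k)}=BA^kS(t)u_0$ for every $k\in\mathbb N$ and $t>0$.

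Next I would invoke the standard analytic-semigroup derivative estimate: by Cauchy's formula applied on a circle of radius proportional to $\tau$ inside the sector of analyticity (on which $\|S(\cdot)\|_{\mathcal L(X)}$ is uniformly bounded for $\tau\in(0,1]$), there exist $M\ge 1$ and $\rho_0\in(0,1)$ with
\[
\|A^k S(\tau)\|_{\mathcal L(X)}\le \frac{M\,k!}{(\rho_0\tau)^k}\q\text{for all}\q\tau\in(0,1],\q k\in\mathbb N.
\]
Writing $S(t)u_0=S(t-s)u(s)$, applying this bound with $\tau=t-s$, and using the graph-norm definition $\|y\|_{D(A)}\le\|y\|_X+\|Ay\|_X$, I obtain
\[
\|BA^kS(t)u_0\|_U\le \|B\|_{\mathcal L(D(A),U)}\,\bigl(\|A^kS(t-s)u(s)\|_X+\|A^{k+1}S(t-s)u(s)\|_X\bigr),
\]
which for $t-s\in(0,1]$ is bounded by a constant multiple of $(k+1)!\,(\rho_0(t-s))^{-(k+1)}\|u(s)\|_X$. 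Inserting this into the Leibniz expansion
\[
g^{(\beta)}(t)=\sum_{k=0}^\beta\binom{\beta}{k}\lg BA^kS(t)u_0,BA^{\beta-k}S(t)u_0\rg_U
\]
produces
\[
|g^{(\beta)}(t)|\le \frac{C\,\beta!}{(\rho_0(t-s))^{\beta+2}}\,\|u(s)\|_X^2\sum_{k=0}^\beta (k+1)(\beta-k+1).
\]

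The main obstacle is cosmetic rather than substantive: the sum $\sum_{k=0}^\beta(k+1)(\beta-k+1)\le C(\beta+1)^3$ introduces a polynomial factor in $\beta$ that must be absorbed in order to match the form $K(t-s)^{-2}\beta!/(\rho(t-s))^\beta$ stated in the lemma. I would handle this by choosing any $\alpha>1$, using $(\beta+1)^3\le C_\alpha\alpha^\beta$, and then setting $\rho\triangleq\rho_0/\alpha\in(0,1)$ so that the $\alpha^\beta$ gets absorbed into $(\rho_0(t-s))^{-\beta}$; the two extra powers of $(t-s)^{-1}$ produced by the graph norm and the squaring then supply the prefactor $(t-s)^{-2}$. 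The only bookkeeping point requiring care is to ensure that $\rho_0$ (and hence $\rho$) is chosen independent of $u_0$ and uniform in $t-s\in(0,1]$, which is guaranteed by the uniform boundedness of $\|S(\cdot)\|_{\mathcal L(X)}$ on a compact subsector of the sector of analyticity.
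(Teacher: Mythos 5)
Your proposal is correct and follows essentially the same route as the paper: a Leibniz expansion of $g^{(\beta)}$, the analytic-semigroup bound $\|A^kS(\tau)\|_{\mathcal L(X)}\le Mk!(\rho_0\tau)^{-k}$, the graph norm to control $B$, and absorption of the polynomial factor $\sum_{k}(k+1)(\beta-k+1)$ into a smaller $\rho$. The only cosmetic differences are that the paper derives the derivative bound from $S^{(m)}(t)=(AS(t/m))^m$ plus Stirling rather than Cauchy's formula, and handles the non-uniformly-bounded case by the explicit rescaling $\widetilde S(t)=e^{-\alpha t}S(t)$ rather than your remark on boundedness over the relevant compact region.
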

\begin{proof}
By the translation, it suffices to prove the desired estimate for
the case that $s=0$ and $0<t\leq1$. We first assume that
$\{S(t);t\geq0\}$ is an uniformly bounded analytic semigroup with
\begin{equation*}
\|S(t)\|_{\mathcal{L}(X,X)}\leq M\;\;\text{for all}\;\; t>0,
\end{equation*}
 for some positive
constant $M$. By \eqref{def} and the binomial formula, we have
\begin{equation*}\label{a-1}
g^{(\beta)}(t)=\sum_{\beta_1+\beta_2=\beta}
\frac{\beta!}{\beta_1!\beta_2!}\lg
Bu^{(\beta_1)}(t),Bu^{(\beta_2)}(t)\rg_U\;\;\mbox{for all}\;\;
\beta\in\mathbb N\;\;\mbox{and}\;\;\;t\in (0,1].
\end{equation*}
It follows from the Cauchy-Schwartz inequality that for any $t\in
(0,1]$,
\begin{equation}\label{song1}
\begin{split}
|g^{(\beta)}(t)|&\leq \sum_{\beta_1+\beta_2=\beta}\frac{\beta!}{\beta_1!\beta_2!}
\|Bu^{(\beta_1)}(t)\|_U\|Bu^{(\beta_2)}(t)\|_U\\
&\leq \sum_{\beta_1+\beta_2=\beta}\frac{\beta!}{\beta_1!\beta_2!}
\|B\|^2_{\mathcal{L}(D(A),U)}\|u^{(\beta_1)}(t)\|_{D(A)}
\|u^{(\beta_2)}(t)\|_{D(A)}\\
&\leq \|B\|^2_{\mathcal{L}(D(A),U)}\!\!\!\!
\sum_{\beta_1+\beta_2=\beta}\frac{\beta!}{\beta_1!\beta_2!}
\Big[\|Au^{(\beta_1)}(t)\|_X+\|u^{(\beta_1)}(t)\|_X\Big]\\
&\;\;\;\;\;\;\;\;\;\;\;\;\;\;\;\;\;\;\;\;\;\;\;\;\;\;\;\;\;\;\;
\times\Big[\|Au^{(\beta_2)}(t)\|_X+\|u^{(\beta_2)}(t)\|_X\Big].
\end{split}
\end{equation}
Meanwhile, since
\begin{equation*}
\|AS(t)\|_{\mathcal L(X,X)}=\|S'(t)\|_{\mathcal L(X,X)}\leq \frac{C}{t},\;\;t>0,
\end{equation*}
for some  constant $C>0$  (cf., e.g., \cite[Chapter 2, Theorem
5.2]{Pa}), and because
\begin{equation*}
S^{(m)}(t)=\Big(AS\Big(\frac{t}{m}\Big)\Big)^m=
\Big(S'\Big(\frac{t}{m}\Big)\Big)^m,\; t>0,  m\in\mathbb N,
\end{equation*}
there is a constant $\rho\in(0,1)$ independent of $u_0$ such that
\begin{equation*}
\|u^{(m)}(t)\|_X\leq \Big(\frac{C}{(t/m)}\Big)^m\|u_0\|_X\leq
\frac{m!}{(\rho t)^m}\|u_0\|_X\;\;\mbox{for all}\;\; t\in
(0,1]\;\;\mbox{and}\;\; m\in\mathbb N.
\end{equation*}
In the last inequality above, we used the Stirling formula:
$m^m\lesssim e^mm!$, $m\in\mathbb N$. Consequently,
\begin{equation*}
\|Au^{(m)}(t)\|_X=\|u^{(m+1)}(t)\|_X\leq \frac{(m+1)!}{(\rho
t)^{m+1}}\|u_0\|_X\;\;\mbox{for all}\;\; t\in
(0,1]\;\;\mbox{and}\;\; m\in\mathbb N.
\end{equation*}
Along with the above two estimates, \eqref{song1} leads to
\begin{equation*}
\begin{split}
|g^{(\beta)}(t)|&\leq 4\|B\|^2_{\mathcal{L}(D(A),U)}
\sum_{\beta_1+\beta_2=\beta}\frac{\beta!}{\beta_1!\beta_2!}
\frac{(\beta_1+1)!}{(\rho t)^{\beta_1+1}}
\frac{(\beta_2+1)!}{(\rho t)^{\beta_2+1}}\|u_0\|_X^2\\
&\leq 4\|B\|^2_{\mathcal{L}(D(A),U)}\beta!(\rho t)^{-\beta-2}\|u_0\|_X^2
\sum_{\beta_1+\beta_2=\beta}(\beta_1+1)(\beta_2+1)\\
&\leq 4\|B\|^2_{\mathcal{L}(D(A),U)}\beta!\big(\rho
t/8\big)^{-\beta-2}\|u_0\|_X^2\;\;\mbox{for all}\;\; t\in (0,1].
\end{split}
\end{equation*}
Thus,
\begin{equation*}
|g^{(\beta)}(t)|\leq N\beta!\big(\rho t\big)^{-\beta}\;\;
\text{for all}\;\; \beta\in\mathbb N,\;t>0,\;\;\;\;\text{with}
\;\;N=4\|B\|^2_{\mathcal{L}(D(A),U)}(\rho t)^{-2}\|u_0\|_X^2,
\end{equation*}
for some new constant $\rho\in (0,1)$ independent of $u_0$. This implies the desired
estimate for  the case where the analytic semigroup
$\{S(t);t\geq0\}$ is uniformly bounded.

Next, we remove the assumption of the uniform boundedness  from the
analytic semigroup $\{S(t);t\geq0\}$. Since
$$
\|S(t)\|_{\mathcal{L}(X,X)}\leq Me^{\alpha t},\; t>0,
$$
for some constants $M>0$ and $\alpha>0$, the semigroup
$\{\widetilde{S}(t); t\geq 0\}$ with  $\widetilde{S}(t)\triangleq
e^{-\alpha t}S(t)$ for $t\geq0$, is  uniformly bounded and analytic.
Given $u_0\in D(A)$,  define
\begin{equation*}
\tilde{g}(t)\triangleq \|B\widetilde{S}(t)u_0\|_U^2, \;\;t>0.
\end{equation*}
It is clear that
$$
g(t)=e^{2\alpha t}\tilde{g}(t),\; t>0,
$$
where $g$ is the function given by (\ref{def}) corresponding to the
same $u_0$ as above. We have already verified that there is a
$\tilde{\rho}\in (0,1)$ independent of $u_0$ such that
\begin{equation}\label{newe1}
\big|\tilde{g}^{(\beta)}(t)\big|\leq N\beta!(\tilde{\rho}
t)^{-\beta}, \;\text{with}\;
N=4\|B\|^2_{\mathcal{L}(D(A),U)}(\tilde{\rho}
t)^{-2}\|u_0\|_X^2,\;\mbox{for all}\; \beta\in\mathbb N, \;t>0.
\end{equation}
Notice that
$$
g^{(\beta)}(t)=\sum_{\beta_1+\beta_2=\beta}\frac{\beta!}{\beta_1!\beta_2!}
(2\alpha)^{\beta_1}e^{2\alpha t}\tilde{g}^{(\beta_2)}(t),\; t>0.
$$
This, along with  \eqref{newe1}, implies the desired inequality for
the case when $s=0$ and $0<t\leq1$, and completes the proof.
\end{proof}

Next, we recall the following lemma, which is a propagation of smallness estimate from measurable sets for analytic
 functions in $\mathbb R$ (cf., e.g., \cite{V}, \cite[Lemma 2]{AE} or \cite[Lemma 13]{AEWZ}).
\begin{Lemma}\label{a-1d}
Let $f:[a,a+s]\rightarrow \mathbb{R}$, where $a\in\mathbb{R}$ and $s>0$, be an analytic function satisfying
\begin{equation*}
\big|f^{(\beta)}(x)\big|\leq M\beta!(s\rho)^{-\beta}\;\;\text{for all}\;\; x\in[a,a+s]\;\;\text{and}\;\; \beta\in\mathbb{N},
\end{equation*}
with some constants $M>0$ and $\rho\in(0,1]$. Assume that $\hat
E\subset[a,a+s]$ is a  subset of positive  measure. Then there are
two constants $C=C(\rho,|\hat E|/s)\geq 1$ and $\vartheta
=\vartheta(\rho,|\hat E|/s)$ with $\vartheta\in(0,1)$ such that
\begin{equation*}
\|f\|_{L^\infty(a,a+s)}\leq CM^{1-\vartheta}\Big(\frac{1}{|\hat
E|}\int_{\hat E}|f(x)|\,d x\Big)^{\vartheta}.
\end{equation*}
\end{Lemma}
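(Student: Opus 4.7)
My plan is to reduce Lemma~\ref{a-1d} to the unit interval by an affine rescaling, extend $f$ to a holomorphic function on a complex neighborhood by exploiting the derivative bound, apply a two-constants (three-circles) inequality to interpolate between the sup-norm on $[0,1]$ and the sup-norm on a short subinterval, and finally pass from the subinterval to the measurable set $\hat E$ via a Lebesgue-density point and a Remez-type iteration. The main obstacle will be this last step, where the only information about $\hat E$ is its measure.

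First, by the change of variables $y=(x-a)/s$ I may assume $[a,a+s]=[0,1]$ and $\hat E\subset[0,1]$; the hypothesis becomes $|f^{(\beta)}(y)|\leq M\beta!\rho^{-\beta}$, and the ratio $|\hat E|/s$ is preserved. Then for any $y_0\in[0,1]$ the Taylor series of $f$ at $y_0$ is majorized termwise by the geometric series $\sum_{\beta\geq 0}M(|z-y_0|/\rho)^\beta$, which converges on $\{z\in\mathbb{C}:|z-y_0|<\rho\}$. Consequently $f$ extends to a holomorphic function $F$ on the complex neighborhood
\begin{equation*}
D\triangleq\{z\in\mathbb{C}:\mathrm{dist}(z,[0,1])<\rho/2\},
\end{equation*}
and summing the geometric series gives $\|F\|_{L^\infty(D)}\leq 2M$.

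With the extension in hand, I would apply the two-constants theorem to the subharmonic function $\log|F|$ on $D$, using the upper bound $\log(2M)$ on $D$ and the upper bound $\log\|f\|_{L^\infty(I)}$ on any fixed subinterval $I\subset[0,1]$ (viewed as part of the boundary data via a conformal map of $D$ to a standard domain). This produces an interpolation inequality
\begin{equation*}
\|f\|_{L^\infty([0,1])}\leq(2M)^{1-\theta_0}\,\|f\|_{L^\infty(I)}^{\theta_0}
\end{equation*}
for some $\theta_0=\theta_0(\rho,|I|)\in(0,1)$, equivalent to iterating the Hadamard three-circles bound along a finite chain of discs covering $[0,1]$.

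The technical heart, and the main obstacle, is to replace $\|f\|_{L^\infty(I)}$ by the normalized $L^1$ average over the measurable set $\hat E$. I would pick a point of Lebesgue density one $y^\ast\in\hat E$ and choose $r=r(|\hat E|)>0$ small enough that $|\hat E\cap(y^\ast-r,y^\ast+r)|\geq 3r/2$. Covering this interval by finitely many discs of radius comparable to $r$ contained in $D$ and iterating the three-circles estimate for $F$ gives a Remez-type bound
\begin{equation*}
\|f\|_{L^\infty(y^\ast-r,y^\ast+r)}\leq C_1 M^{1-\theta_1}\Big(\frac{1}{|\hat E|}\int_{\hat E}|f(y)|\,dy\Big)^{\theta_1},
\end{equation*}
with $C_1\geq 1$ and $\theta_1\in(0,1)$ depending only on $\rho$ and $|\hat E|$. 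Inserting this into the preceding interpolation inequality (with $I=(y^\ast-r,y^\ast+r)$) and tracking exponents yields the claimed inequality, with the final $\vartheta\in(0,1)$ and $C\geq 1$ depending only on $\rho$ and $|\hat E|/s$ by the initial scaling. The most delicate bookkeeping is ensuring that the constants in the density-point/Remez step truly depend on $\hat E$ only through its measure, which is precisely what is carried out in the references cited after the statement.
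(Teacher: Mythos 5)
The paper does not prove this lemma at all: it is recalled verbatim from the literature, with the proof deferred to \cite{V}, \cite[Lemma 2]{AE} and \cite[Lemma 13]{AEWZ}. So the only meaningful question is whether your sketch would actually constitute a proof. Your first two steps are sound and are indeed how those references begin: the rescaling to $[0,1]$, the holomorphic extension $F$ with $\|F\|_{L^\infty}\leq 2M$ on a $\rho/2$-neighborhood of the interval (the overlapping Taylor discs glue by uniqueness of analytic continuation), and a two-constants/three-circles chain giving $\|f\|_{L^\infty([0,1])}\leq (2M)^{1-\theta_0}\|f\|_{L^\infty(I)}^{\theta_0}$ for a subinterval $I$ are all correct.

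The gap is at the step you yourself identify as the heart of the matter, and it is not closed by what you write. ``Covering the interval by finitely many discs and iterating the three-circles estimate'' cannot produce the bound $\|f\|_{L^\infty(I)}\leq C_1M^{1-\theta_1}\bigl(|\hat E|^{-1}\int_{\hat E}|f|\bigr)^{\theta_1}$: Hadamard's three-circles theorem only propagates sup-norm smallness from one disc (or subinterval with nonempty interior) to a neighboring one, and it is blind to a measurable set $\hat E$ that may have empty interior and carry only $L^1$ information. Converting the $L^1$ average over a measure-positive set into a pointwise bound is exactly the content of the Remez-type inequality for analytic functions, whose proof requires a genuinely different ingredient --- Jensen's formula to bound the number of zeros of $F$ in a smaller disc by $\log(2M/\max_I|f|)$, followed by the classical polynomial Remez inequality (or, equivalently, the sublevel-set measure estimates of Vessella/Nadirashvili). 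Your sketch invokes the conclusion of that ingredient without supplying it, and then defers it back to the very references the lemma cites, so nothing new is proved. A second, independent defect: choosing a Lebesgue density point $y^{\ast}$ and a radius $r$ with $|\hat E\cap(y^{\ast}-r,y^{\ast}+r)|\geq 3r/2$ forces $r$ to depend on the fine structure of $\hat E$, not merely on $|\hat E|$, so the resulting constants $C_1,\theta_1$ would not depend on $\hat E$ only through $|\hat E|/s$ as the lemma requires. The standard remedy is a pigeonhole argument over a fixed partition of $[0,1]$ into $O(1/\rho)$ subintervals of length comparable to $\rho$, which yields one subinterval $I$ with $|\hat E\cap I|\geq c\,\rho\,|\hat E|$ and keeps every constant a function of $\rho$ and $|\hat E|/s$ alone.
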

\bigskip

When $(A,B)$ verifies the  observability inequality \eqref{wang1.3},
we can make use of   Lemma~\ref{a-prop1} and Lemma~\ref{a-1d} to
prove the interpolation inequality presented in the following lemma.

\begin{Lemma}\label{a-interpolation}
Suppose that the conditions in Theorem~\ref{main} hold. Let $0\leq
t_1<t_2$ with $0<t_2-t_1\leq1$. Assume that $E\subset[t_1,t_2]$ is a
subset of positive measure and verifies
$|E\cap(t_1,t_2)|\geq\eta(t_2-t_1)$ with $\eta\in(0,1)$. Then there
are two positive constants $C=C(d,k,\rho,\eta,\|B\|_{\mathcal
L(D(A),U)})$ (where $d, k$ are given by (\ref{wang1.3}) and $\rho$
is given by Lemma~\ref{a-prop1}) and
$\theta=\theta(\rho,\eta)\in(0,1)$ such that for any $u_0\in D(A)$,
the corresponding solution  $u$ to Equation~\eqref{a-eq1} satisfies
\begin{equation}\label{407c1}
\|u(t_2)\|_X\leq
\Big(Ce^{\frac{C}{(t_2-t_1)^k}}\int_{t_1}^{t_2}\chi_{E}(t)
\|Bu(t)\|_U\,dt\Big)^\theta\|u(t_1)\|_X^{1-\theta}.
\end{equation}
\end{Lemma}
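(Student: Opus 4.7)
The plan is to combine the observability inequality on the full interval $[t_1,t_2]$ (which gives control in $L^2$) with the propagation-of-smallness estimate for analytic functions (Lemma~\ref{a-1d}), applied to the function $g(t)=\|Bu(t)\|_U^2$ whose analyticity and derivative bounds are supplied by Lemma~\ref{a-prop1}. The first step converts $\|u(t_2)\|_X^2$ into an $L^2$-integral of $\|Bu(\cdot)\|_U^2$ over the whole interval; the second step trades that $L^2$-integral for an $L^1$-integral over the measurable subset $E$, at the cost of a power of $\|u(t_1)\|_X$.

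More concretely, I would first set $L=t_2-t_1\in(0,1]$ and apply the hypothesis \eqref{wang1.3} to the datum $u(t_1)$, using the semigroup identity $S(t_2)u_0=S(L)u(t_1)$, to obtain
\begin{equation*}
\|u(t_2)\|_X^2\leq e^{d/L^k}\int_{t_1}^{t_2}\|Bu(t)\|_U^2\,dt
\leq e^{d/L^k}\,L\,\|g\|_{L^\infty(t_1,t_2)},
\end{equation*}
where $g(t)=\|Bu(t)\|_U^2$. By Lemma~\ref{a-prop1} applied with $s=t_1$, the function $g$ is real-analytic on $[t_1,t_2]$ with derivative bounds controlled by $M:=KL^{-2}\|u(t_1)\|_X^2$ and scale $\rho L$. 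Hence Lemma~\ref{a-1d} (with $\hat E=E\cap(t_1,t_2)$, whose measure is at least $\eta L$) yields constants $\vartheta=\vartheta(\rho,\eta)\in(0,1)$ and $C=C(\rho,\eta)$ such that
\begin{equation*}
\|g\|_{L^\infty(t_1,t_2)}\leq CM^{1-\vartheta}
\Big(\frac{1}{|E\cap(t_1,t_2)|}\int_E\chi_{(t_1,t_2)}(t)\,g(t)\,dt\Big)^{\vartheta}.
\end{equation*}

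To pass from the $L^2$-integral $\int_E g\,dt=\int_E\|Bu\|_U^2\,dt$ to the $L^1$-integral $\int_E\|Bu\|_U\,dt$ required by \eqref{407c1}, I would use the pointwise bound $\|Bu(t)\|_U^2\leq \|g\|_{L^\infty(t_1,t_2)}^{1/2}\|Bu(t)\|_U$, again controlled by $M^{1/2}$ via the $\beta=0$ case of Lemma~\ref{a-prop1}. Substituting this into the displayed inequality and then collecting factors of $L$, $M$ and $|E|\geq\eta L$, the total prefactor becomes $e^{d/L^k}L^{-1}\cdot(KL^{-2}\|u(t_1)\|_X^2)^{1-\vartheta/2}\eta^{-\vartheta}$; the $L$-powers combine to $L^{-1}$ up to the factor $\|u(t_1)\|_X^{2-\vartheta}$, which is harmlessly absorbed into $e^{C/L^k}$ since $L\le 1$. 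Setting $\theta=\vartheta/2$, taking a square root, and rearranging the exponential by writing $e^{C/L^k}=(e^{(C/\theta)/L^k})^{\theta}$ yields \eqref{407c1}.

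The main obstacle I expect is the bookkeeping in the last step: the output of Lemma~\ref{a-1d} is an $L^1$ bound on the square $\|Bu\|_U^2$, whereas \eqref{407c1} wants an $L^1$ bound on $\|Bu\|_U$ itself, so one must spend a second appeal to the analytic pointwise bound to lower the integrand's power, and the resulting arithmetic of exponents on $L$ and $\|u(t_1)\|_X$ has to be carried through carefully to land on the interpolation pair $(\theta,1-\theta)$. Everything else is essentially a translation of what we already have from Lemmas~\ref{a-prop1} and \ref{a-1d} together with the observability hypothesis \eqref{wang1.3}.
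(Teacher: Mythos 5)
Your overall strategy (use the observability inequality \eqref{wang1.3} to reduce $\|u(t_2)\|_X^2$ to information about $g=\|Bu(\cdot)\|_U^2$, apply Lemma~\ref{a-1d} to pass to the measurable set, then spend one more pointwise bound to lower $\|Bu\|_U^2$ to $\|Bu\|_U$ and take a square root so that $\theta=\vartheta/2$) is the same as the paper's, and your final exponent bookkeeping matches. But there is a genuine gap at the central step: you apply Lemma~\ref{a-prop1} with $s=t_1$ and claim uniform derivative bounds $|g^{(\beta)}(t)|\le M\beta!(\rho L)^{-\beta}$ on all of $[t_1,t_2]$ with $M=KL^{-2}\|u(t_1)\|_X^2$. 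What Lemma~\ref{a-prop1} actually gives at time $t$ is $K(t-t_1)^{-2}\beta!\bigl(\rho(t-t_1)\bigr)^{-\beta}\|u(t_1)\|_X^2$, which blows up as $t\downarrow t_1$; it is \emph{not} dominated by $ML^{-0}\beta!(\rho L)^{-\beta}$ near the left endpoint (and when $t_1=0$, which the statement allows, $g$ need not even extend analytically to $t=t_1$, since the semigroup is analytic only for $t>0$). So the hypothesis of Lemma~\ref{a-1d} is not satisfied on $[t_1,t_2]$ with your choice of $M$ and scale, and the same defect infects your later appeal to ``the $\beta=0$ case'' to bound $\|g\|_{L^\infty(t_1,t_2)}^{1/2}$ by $M^{1/2}$.

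The missing idea is to retreat from the left endpoint before invoking analyticity: set $\tau=t_1+\frac{\eta}{10}(t_2-t_1)$ and work on $[\tau,t_2]$, where $t-t_1\ge\frac{\eta}{10}(t_2-t_1)$ makes the bounds of Lemma~\ref{a-prop1} uniform, with $M\sim 100\,\eta^{-2}(t_2-t_1)^{-2}K\|u(t_1)\|_X^2$ and scale $\rho\eta(t_2-\tau)/10$. Since $|E\cap(t_1,\tau)|\le\frac{\eta}{10}(t_2-t_1)$, the set $\hat E=E\cap[\tau,t_2]$ still has measure at least $\frac{\eta}{2}(t_2-t_1)$, so Lemma~\ref{a-1d} applies on $[\tau,t_2]$ with $\hat E$; the observability inequality \eqref{wang1.3} is then invoked on $[\tau,t_2]$ (with $t_2-\tau$ comparable to $t_2-t_1$, so the exponential cost remains $e^{C(\eta,d,k)/(t_2-t_1)^k}$), and the sup of $\|Bu\|_U$ over $[\tau,t_2]$ is bounded by $C(t_2-t_1)^{-1}\|u(t_1)\|_X$. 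This intermediate time $\tau$ is precisely where the $\eta$-dependence of $C$ and $\theta$ in the statement originates; without it the argument does not close.
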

\begin{proof}
Set
$$\tau=t_1+\frac{\eta}{10}(t_2-t_1)\;\;\text{and}\;\;\hat E=E\cap[\tau,t_2].$$
Clearly,
\begin{equation}\label{biao1}
|\hat E|\geq\frac{ \eta(t_2-t_1)}{2}.
\end{equation}
By Lemma~\ref{a-prop1}, we get that for any $t\in[\tau,t_2]$,
\begin{equation*}
\begin{split}
|g^{(\beta)}(t)|&\leq K\frac{(t-t_1)^{-2}\beta!}{(\rho (t-t_1))^\beta}\|u(t_1)\|_X^2
\leq K\frac{(\tau-t_1)^{-2}\beta!}{(\rho(\tau-t_1))^\beta}\|u(t_1)\|_X^2\\
&\leq K\frac{\big(\eta (t_2-t_1)/10\big)^{-2}\beta!}{\big(\rho\eta(t_2-\tau)/10\big)
^\beta}\|u(t_1)\|_X^2\\
&\triangleq M\beta!( s\rho_1)^{-\beta}\;\;\text{for all}\;\; \beta\in\mathbb N,\\
\end{split}
\end{equation*}
with
$$M=100\eta^{-2}(t_2-t_1)^{-2}K\|u(t_1)\|_X^2,\;\;\rho_1=\frac{\rho\eta}{10}\;\;\;\;
\text{and}\;\;\;\;s=t_2-\tau.$$ According to Lemma~\ref{a-1d}, there
are positive constants $C=C(K,\rho,\eta)$ and
$\vartheta=\vartheta(\rho,\eta)\in(0,1)$ such that
\begin{equation*}
\|g\|_{L^\infty(\tau,t_2)}\leq
(t_2-t_1)^{-2}C\|u(t_1)\|_X^{2(1-\vartheta)}\Big(\frac{1} {|\hat
E|}\int_{\hat E}|g(s)|\,d s\Big)^{\vartheta},
\end{equation*}
which is equivalent to
\begin{equation}\label{a-2}
\|Bu(t)\|_U^2\leq
(t_2-t_1)^{-2}C\|u(t_1)\|_X^{2(1-\vartheta)}\Big(\frac{1}{|\hat E|}
\int_{\hat E}\|Bu(s)\|_U^2\,d s\Big)^{\vartheta}\;\;\mbox{for
all}\;\; t\in[\tau,t_2].
\end{equation}
By the translation and the   observability inequality
\eqref{wang1.3}, we have
\begin{equation*}
\|u(t_2)\|_X^2\leq
e^{\frac{d}{(t_2-\tau)^k}}\int_{\tau}^{t_2}\|Bu(t)\|_U^2\,dt\leq
e^{\frac{C(d,k,\eta)}{(t_2-t_1)^k}}\int_{\tau}^{t_2}\|Bu(t)\|_U^2\,dt.
\end{equation*}
This, along with \eqref{biao1} and  \eqref{a-2},   leads to
\begin{equation}\label{biao2}
\begin{split}
\|u(t_2)\|_X^2
&\leq e^{\frac{C(\eta,d,k)}{(t_2-t_1)^k}}(t_2-\tau)(t_2-t_1)^{-2}C
\|u(t_1)\|_X^{2(1-\vartheta)}
\Big(\frac{1}{|\hat E|}\int_{\hat E}\|Bu(s)\|_U^2\,ds\Big)^{\vartheta}\\
&\leq
e^{\frac{C(\eta,d,k)}{(t_2-t_1)^k}}(t_2-t_1)^{-2}C\|u(t_1)\|_X^{2(1-\vartheta)}
\max_{t\in[\tau,t_2]}\|Bu(t)\|_U^{\vartheta} \Big(\int_{\hat
E}\|Bu(s)\|_U\,ds\Big)^{\vartheta}.
\end{split}
\end{equation}
By the properties of analytic semigroups, we see that for any
$t\in[\tau,t_2]$,
\begin{equation*}
\|Bu(t)\|_U\leq \|B\|_{\mathcal L(D(A),U)}\big(\|Au(t)\|_X+\|u(t)\|_X\big)
\leq C(t_2-t_1)^{-1}\|u(t_1)\|_X,
\end{equation*}
with some constant $C=C(\|B\|_{\mathcal L(D(A),U)})>0$.
This, together with \eqref{biao2}, indicates that
\begin{equation*}
\begin{split}
\|u(t_2)\|_X^2
&\leq e^{\frac{C(\eta,d,k)}{(t_2-t_1)^k}}(t_2-t_1)^{-2}C\|u(t_1)\|_X^{2(1-\vartheta)}
(t_2-t_1)^{-1}C\|u(t_1)\|_X^{\vartheta}
\Big(\int_{\hat E}\|Bu(s)\|_U\,ds\Big)^{\vartheta}\\
&\leq
(t_2-t_1)^{-3}e^{\frac{C(\eta,d,k)}{(t_2-t_1)^k}}C\|u(t_1)\|_X^{2-\vartheta}
\Big(\int_{\hat E}\|Bu(s)\|_U\,ds\Big)^{\vartheta}.
\end{split}
\end{equation*}
This, along with the estimate $(t_2-t_1)^{-3}\leq
e^{3/[k(t_2-t_1)^k]}$, leads to  \eqref{407c1}, and completes the
proof.
\end{proof}

We end this subsection with presenting the proof of
Theorem~\ref{main}. The proof is based on
Lemma~\ref{a-interpolation} and the telescoping series method
(provided in \cite{AEWZ}), which is a modified version of that in
\cite{PW2}.

\noindent
\begin{proof}[\text{The proof of Theorem~\ref{main} }]
Let $\ell\in(0,T)$ be a Lebesgue density point of $E$. Then for each
constant $q\in(0,1)$ which is to be fixed later, there exists a
monotone  decreasing sequence $\{\ell_m\}_{m\geq1}\subset(0,T)$,
with $0<\ell_1-\ell_{2}\leq1$, such that (cf. \cite[Proposition
2.1]{PW2})
\begin{equation}\label{jia4}
\ell_{m+1}-\ell_{m+2}=q(\ell_m-\ell_{m+1}),\;\;
|E\cap(\ell_{m+1},\ell_m)|\geq
\frac{\ell_{m}-\ell_{m+1}}{3}\;\;\mbox{for all}\;\; m\geq1,
\end{equation}
and such that
\begin{equation}\label{16-11}
\lim_{m\rightarrow +\infty}\ell_m=\ell.
\end{equation}

Given $u_0\in D(A)$, write  $u(\cdot)=S(\cdot)u_0$. According to
Lemma~\ref{a-interpolation},  there are constants
$C=C(d,k,\rho,\|B\|_{\mathcal L(D(A),U)})\geq1$ and $\theta=\theta(\rho)\in(0,1)$
such that  when $m\geq 1$,
\begin{equation*}
\|u(\ell_m)\|_X
\leq \Big(Ce^{\frac{C}{(\ell_m-\ell_{m+1})^k}}
\int_{\ell_{m+1}}^{\ell_m}\chi_{E}\|Bu(t)\|_U\,d t\Big)^{\theta}
\|u(\ell_{m+1})\|_X^{1-\theta}.
\end{equation*}
This, together with   Young's inequality:
\begin{equation*}
ab\leq \varepsilon a^p+\varepsilon^{-\frac{r}{p}}b^r,\;\;\text{when}\;\;a>0, b>0, \varepsilon>0,
\end{equation*}
with
\begin{equation*}
\frac{1}{p} +\frac{1}{r}=1, \ p>1, \ r>1,
\end{equation*}
indicates that when $m\geq1$,
\begin{equation*}
\|u(\ell_m)\|_X\leq \varepsilon\|u(\ell_{m+1})\|_X
+\varepsilon^{-\frac{1-\theta}{\theta}}
Ce^{\frac{C}{(\ell_m-\ell_{m+1})^k}}
\int_{\ell_{m+1}}^{\ell_m}\chi_{E}\|Bu(t)\|_U\,d t\;\;\mbox{for
all}\;\; \varepsilon>0,
\end{equation*}
which is equivalent to
\begin{equation}\label{16-12}
\begin{split}
\varepsilon^{1-\theta}e^{-\frac{C}{(\ell_m-\ell_{m+1})^k}}
\|u(\ell_{m})\|_X
&-\varepsilon e^{-\frac{C}{(\ell_m-\ell_{m+1})^k}}
\|u(\ell_{m+1})\|_X\\
&\leq C \int_{\ell_{m+1}}^{\ell_m}\chi_{E}\|Bu(t)\|_U\,d
t\;\;\mbox{for all}\;\; \varepsilon>0.
\end{split}
\end{equation}
By letting $\varepsilon=e^{-1/[(\ell_m-\ell_{m+1})^k]}$ in \eqref{16-12}, we have
\begin{equation}\label{16-13}
\begin{split}
e^{-\frac{C+1-\theta}{(\ell_m-\ell_{m+1})^k}}\|u(\ell_m)
\|_X&-e^{-\frac{C+1}{(\ell_m-\ell_{m+1})^k}}
\|u(\ell_{m+1})\|_X\\
&\leq C\int_{\ell_{m+1}}^{\ell_m}\chi_{E}\|Bu(t)\|_U\,d
t\;\;\mbox{for all}\;\; m\geq 1.
\end{split}
\end{equation}
We now take
\begin{equation*}
q=\Big(\frac{C+1-\theta}{C+1}\Big)^{\frac{1}{k}}\in (0,1),
\;\;\text{where $C$ and $\theta$ are given in}\;\;\eqref{16-13}.
\end{equation*}
It follows from \eqref{16-13} and the first formula of \eqref{jia4} that
\begin{equation*}
\begin{split}
e^{-\frac{C+1-\theta}{(\ell_m-\ell_{m+1})^k}}\|u(\ell_m)
\|_X&-e^{-\frac{C+1-\theta}{(\ell_{m+1}-\ell_{m+2})^k}}
\|u(\ell_{m+1})\|_X\\
&\leq
C\int_{\ell_{m+1}}^{\ell_m}\chi_{E}\|Bu(t)\|_U\,d t\;\;\text{for all}\;\; m\geq1.
\end{split}
\end{equation*}
Summing the above inequality from $m=1$ to $+\infty$, and noticing the convergence \eqref{16-11},
as well as
\begin{equation*}
\sup_{t\in(0,T)}\|u(t)\|_X<+\infty,
\end{equation*}
we see that
\begin{equation*}
\|u(\ell_{1})\|_X\leq
Ce^{\frac{C+1-\vartheta}{(\ell_1-\ell_2)^k}}
\int_\ell^{\ell_1}\chi_{E}\|Bu(t)\|_U\,d t.
\end{equation*}
Because $\|u(T)\|_X\leq C\|u(\ell_{1})\|_X$, the above leads to
\eqref{403c8}. This completes the proof.
\end{proof}

\bigskip
\subsection{The proof of Theorem~\ref{bing1}}
The main idea of the proof is borrowed from \cite[Theorem 6]{AEWZ}.

\begin{proof}[\text{The proof of Theorem~\ref{bing1}}]
We begin with proving  the interpolation inequality \eqref{hong1}.
For each $\lambda\geq\lambda_1$, we define
\begin{equation*}
\mathbb E_\lambda=\bigcup_{\lambda_k\leq \lambda}\mathbb E_{\lambda_k},
\end{equation*}
which is a subspace of $X$. Denote by $\mathcal E_{\lambda}$ the
orthogonal projection operator from $X$ to  $\mathbb E_{\lambda}$.
Given  $u_0\in X$,  write $\mathcal E_{\lambda}^\bot
u_0=u_0-\mathcal E_{\lambda} u_0 $. Because
\begin{equation}\label{hong3}
\|S(t)u_0\|_X\leq \|S(t)\mathcal E_{\lambda} u_0\|_X+\|S(t)\mathcal E^\bot_{\lambda} u_0\|_X,
\end{equation}
we conclude from the properties $(i)$ and $(iii)$ of Hypothesis $(H)$ that
\begin{equation*}
\begin{split}
\|S(t)\mathcal E_{\lambda} u_0 \|_X&\leq Ne^{N{\lambda}^\gamma}\|BS(t)\mathcal E_{\lambda} u_0\|_U\\
&\leq Ne^{N{\lambda}^\gamma}\big(\|BS(t)u_0\|_U+\|BS(t)\mathcal E_{\lambda}^\bot u_0\|_U\big)\\
&\leq  Ne^{N{\lambda}^\gamma}\big(\|BS(t)u_0\|_U+\|B\|_{\mathcal L(X,U)}\|S(t)\mathcal E_{\lambda}^\bot u_0\|_X\big).
\end{split}
\end{equation*}
This, together with \eqref{hong3}, implies that
\begin{equation}\label{hong4}
\|S(t)u_0\|_X\leq Ne^{N{\lambda}^\gamma}\big(1+\|B\|_{\mathcal L(X,U)}\big)
\big(\|BS(t)u_0\|_U+\|S(t)\mathcal E_{\lambda}^\bot u_0\|_X\big).
\end{equation}
By the property $(ii)$ of  Hypothesis $(H)$, we have
$$\|S(t)\mathcal E_{\lambda}^\bot u_0\|_X\leq e^{-\mu{\lambda} t}\|\mathcal E^\bot_{\lambda} u_0\|_X\leq e^{-\mu{\lambda} t}\|u_0\|_X.$$
Along with  \eqref{hong4}, this yields that for any
$\lambda\geq\lambda_1$,
\begin{equation*}
\|S(t)u_0\|_X\leq N\big(1+\|B\|_{\mathcal L(X,U)}\big)
\Big[\exp\Big(N{\lambda}^\gamma-\frac{\mu{\lambda}
t}{2}\Big)\Big]\big(e^{\mu{\lambda}
t/2}\|BS(t)u_0\|_U+e^{-\mu{\lambda} t/2}\|u_0\|_X\big).
\end{equation*}
Because
$$\max_{\lambda>0}\Big\{N{\lambda}^\gamma-\frac{\mu{\lambda} t}{2}\Big\}\leq N
\Big(\frac{2\gamma N}{\mu
t}\Big)^{\frac{\gamma}{1-\gamma}},\;\;\text{when}\;\;\gamma\in(0,1),$$
 there is a constant $K=K(N,\mu,\gamma,
\|B\|_{\mathcal L(X,U)})$ such that
$$\|S(t)u_0\|_X\leq Ke^{Kt^{-\frac{\gamma}{1-\gamma}}}\Big(e^{\mu\lambda t/2}\|BS(t)u_0\|_U
+e^{-\mu\lambda t/2}\|u_0\|_X\Big)\;\;\mbox{for all}\;\;
\lambda\geq\lambda_1,
$$
which is equivalent to
\begin{equation}\label{biao3}
\|S(t)u_0\|_X\leq
Ke^{Kt^{-\frac{\gamma}{1-\gamma}}}\Big(\varepsilon^{-1}
\|BS(t)u_0\|_U+\varepsilon\|u_0\|_X\Big)\;\;\mbox{for all}\;\;
\varepsilon\in(0,e^{-\mu\lambda_1t/2}].
\end{equation}
Since
$$
\|S(t)u_0\|_X\leq M\|u_0\|_X,\;\;\mbox{when}\;\; t\in(0,1],\;\;\mbox{for some}\;\;M>0,
$$
  it holds that for each
$t\in(0,1]$,
\begin{equation*}
\|S(t)u_0\|_X\leq Me^{\mu\lambda_1t/2}\varepsilon
\|u_0\|_X\;\;\mbox{for all}\;\; \varepsilon\geq
e^{-\mu\lambda_1t/2}.
\end{equation*}
This, combined with \eqref{biao3}, leads to
$$
\|S(t)u_0\|_X\leq
Me^{\mu\lambda_1}Ke^{Kt^{-\frac{\gamma}{1-\gamma}}}\Big(\varepsilon^{-1}
\|BS(t)u_0\|_U+\varepsilon\|u_0\|_X\Big)\;\;\mbox{for all}\;\;
\varepsilon\in(0,+\infty).
$$
Minimizing the above inequality with respect to $\varepsilon$ gives  the
desired estimate \eqref{hong1}.

We next show the  observability inequality \eqref{hong2} through
utilizing a telescoping series method. Let $\ell\in(0,T)$ be a
Lebesgue point of $E$. For each constant $q\in(0,1)$ which will be precised later,  there exists a monotone decreasing sequence
$\{\ell_m\}_{m\geq1}$ satisfying \eqref{jia4}, \eqref{16-11} and
$0<\ell_1-\ell_{2}\leq1$ (cf. \cite[Proposition 2.1]{PW2}).  Let us
set
$$\tau_m=\ell_{m+1}+\frac{\ell_m-\ell_{m+1}}{6}\;\;\text{for all}\;\; m\geq1.$$
By the inequality \eqref{hong1}, we deduce that for any $t\in[\tau_m,\ell_m]$ and any $u_0\in X$,
\begin{equation}\label{yuanyuan2.24}
\begin{split}
 \|S(t)u_0\|_X&\leq \Big(C\exp\big(C(t-\ell_{m+1})^{-\frac{\gamma}{1-\gamma}}\big)
 \|BS(t)u_0\|_{U}
\Big)^{\frac 12}\|S(\ell_{m+1})u_0\|_X^{\frac 12}\\
&\leq  \Big(N\exp\big(N(\ell_m-\ell_{m+1})^{-\frac{\gamma}
{1-\gamma}}\big)\|BS(t)u_0\|_{U}
\Big)^{\frac 12}\|S(\ell_{m+1})u_0\|_X^{\frac 12},
\end{split}
\end{equation}
with some constant $N\geq1$.
Because
$$\|S(\ell_m)u_0\|_X\leq M\|S(t)u_0\|_X\;\;\mbox{for some}\; M>0 \;\;\mbox{and for all}\;\;  t\in[\tau_m,\ell_m],
 $$
the  estimate (\ref{yuanyuan2.24}) implies that
$$
\|S(\ell_m)u_0\|_X\leq
\Big(N\exp\big(N(\ell_m-\ell_{m+1})^{-\frac{\gamma}
{1-\gamma}}\big)\|BS(t)u_0\|_{U} \Big)^{\frac
12}\|S(\ell_{m+1})u_0\|_X^{\frac 12}\;\;\mbox{for all}\;\;
t\in[\tau_m,\ell_m].
$$
Then by Young's inequality, we have
$$
\|S(\ell_m)u_0\|_X\leq \varepsilon
\|S(\ell_{m+1})u_0\|_X+\varepsilon^{-1}
N\exp\Big(\frac{N}{(\ell_m-\ell_{m+1})^{\frac{\gamma}{1-\gamma}}}\Big)
\|BS(t)u_0\|_U\;\;\mbox{for all}\;\; t\in[\tau_m,\ell_m].
$$
Integrating the above inequality over $[\tau_m,\ell_m]\cap E$ and
noting that
$$
|(\tau_m,\ell_m)\cap E|\geq (\ell_m-\ell_{m+1})/6,
$$
we obtain that for any $\varepsilon>0$,
$$
\|S(\ell_m)u_0\|_X\leq \varepsilon \|S(\ell_{m+1})u_0\|_X+\varepsilon^{-1}
N\exp\Big(\frac{N}{(\ell_m-\ell_{m+1})^{\frac{\gamma}{1-\gamma}}}\Big)
\int_{\ell_{m+1}}^{\ell_m}\chi_E(t)\|BS(t)u_0\|_U\,dt.
$$
By taking
$$\varepsilon=\exp\Big(-\frac{1}
{2(\ell_m-\ell_{m+1})^{\frac{\gamma}{1-\gamma}}}\Big)$$
in the above inequality,
we see that
\begin{equation}\label{jia5}
\begin{split}
\exp\Big(-\frac{N+\frac{1}{2}}
{(\ell_m-\ell_{m+1})^{\frac{\gamma}{1-\gamma}}}\Big)
\|S(\ell_m)u_0\|_X&-
\exp\Big(-\frac{N+1}
{(\ell_m-\ell_{m+1})^{\frac{\gamma}{1-\gamma}}}\Big)\|S(\ell_{m+1})u_0\|_X\\
&\leq N\int_{\ell_{m+1}}^{\ell_m}\chi_E(t)\|BS(t)u_0\|_U\,dt.
\end{split}
\end{equation}
We now take
$$q=\Big(\frac{N+\frac{1}{2}}{N+1}\Big)^{\frac{1-\gamma}{\gamma}}\in (0,1).$$
It follows from \eqref{jia5} that
\begin{equation*}\label{jia6}
\begin{split}
\exp\Big(-\frac{N+\frac{1}{2}}
{(\ell_m-\ell_{m+1})^{\frac{\gamma}{1-\gamma}}}\Big)
\|S(\ell_m)u_0\|_X&-
\exp\Big(-\frac{N+\frac{1}{2}}
{\big[q(\ell_m-\ell_{m+1})\big]^{\frac{\gamma}{1-\gamma}}}\Big)
\|S(\ell_{m+1})u_0\|_X\\
&\leq N\int_{\ell_{m+1}}^{\ell_m}\chi_E(t)\|BS(t)u_0\|_U\,dt.
\end{split}
\end{equation*}
Summing the above inequality with respect to $m$ from $1$ to
$+\infty$,  using \eqref{jia4} and \eqref{16-11}, we deduce the
desired estimate \eqref{hong2} immediately. This completes the
proof.
\end{proof}

\subsection{A telescoping series method}
In this subsection, we
 introduce a telescoping series method, by which one can derive the
$L^1$-observability inequality from time intervals through the
$L^2$-observability inequality from time intervals  for the equation
(\ref{a-eq1}). The main result of this subsection is as follows.
\begin{Proposition}\label{refined}
Let  $A:D(A)\subset X \rightarrow X$ generate a $C_0$ semigroup
$\{S(t);t\geq0\}$ in $X$, such that
$$
\|S(t)\|_{\mathcal L(X,X)}\leq Me^{\alpha t}\;\;\mbox{for all}\;\;
t\geq 0,
$$
where  $M>0$ and $\alpha\in\mathbb R^+$ are independent of $t$. Let
$B\in\mathcal L(X,U)$. Suppose that there are two positive constants
$d$, $k$ and a nondecreasing function   $\theta(\cdot)$ from
$\mathbb R^+$ to $\mathbb R^+$ such that
\begin{equation}\label{29ass}
\|S(L)u_0\|_X\leq
\theta(L)e^{\frac{d}{L^k}}\Big(\int_0^L\|BS(t)u_0\|
_U^2\,dt\Big)^{1/2}\;\;\mbox{for all}\;\;
L>0\;\;\mbox{and}\;\;u_0\in X.
\end{equation}
 Then there exists a positive constant $N=N(d,k)$
such that
\begin{equation}\label{403c1}
\|S(T)u_0\|_X\leq F(T)e^{\frac{N}{T^k}}\int_0^T\|BS(t)u_0\|
_U\,dt\;\;\mbox{for all}\;\;T>0\;\;\mbox{and}\;\;u_0\in X,
\end{equation}
where $F(\cdot)$ is a function defined by
\begin{equation}\label{29no5}
F(T)=\theta(T)^2\|B\|_{\mathcal L(X,U)}
 Me^{\alpha T},\; T>0.
\end{equation}
\end{Proposition}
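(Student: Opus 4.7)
The plan is to adapt the telescoping strategy used in Theorems~\ref{main} and~\ref{bing1} to extract an $L^1$-observability inequality from the $L^2$-observability hypothesis \eqref{29ass}. The crucial new ingredient, compared to those earlier proofs, is a Cauchy--Schwarz step that converts the $L^2$-integral into an $L^1$-integral at the cost of an extra $L^\infty$-factor; that factor can in turn be controlled by $\|S(\ell_{m+1})u_0\|_X$ via the semigroup bound $\|S(t)\|_{\mathcal L(X,X)}\leq Me^{\alpha t}$ together with the boundedness of $B$.

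First I fix $T>0$, $u_0\in X$, pick a ratio $q\in(2^{-1/k},1)$ to be chosen, and build the decreasing sequence $\ell_1=T$, $\ell_1-\ell_2=(1-q)T$, $\ell_{m+1}-\ell_{m+2}=q(\ell_m-\ell_{m+1})$, so that $\ell_m\downarrow 0$ geometrically with $L_m:=\ell_m-\ell_{m+1}=q^{m-1}(1-q)T$. Applying \eqref{29ass} on $[\ell_{m+1},\ell_m]$ with initial datum $S(\ell_{m+1})u_0$, squaring, bounding $\|BS(t)u_0\|_U\leq\|B\|_{\mathcal L(X,U)}Me^{\alpha T}\|S(\ell_{m+1})u_0\|_X$ on that interval inside the integral, taking a square root, and applying $\sqrt{xy}\leq\tfrac12(\varepsilon x+y/\varepsilon)$, one arrives at
\begin{equation*}
\|S(\ell_m)u_0\|_X \leq \varepsilon\|S(\ell_{m+1})u_0\|_X + \frac{\theta(T)^2\|B\|_{\mathcal L(X,U)}Me^{\alpha T}}{4\varepsilon}\,e^{2d/L_m^k}\!\int_{\ell_{m+1}}^{\ell_m}\!\|BS(t)u_0\|_U\,dt
\end{equation*}
for every $\varepsilon>0$; here the monotonicity of $\theta$ is used to replace $\theta(L_m)$ by $\theta(T)$.

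Next I choose $\varepsilon=\exp(a(1-q^{-k})/L_m^k)$ and the weight $W_m=\exp(-a/L_m^k)$ with $a=2d/(2-q^{-k})$. A direct check shows $W_m\varepsilon=W_{m+1}$, and the coefficient of the integral after multiplying the previous inequality by $W_m$ reduces to exactly $\tfrac14\theta(T)^2\|B\|_{\mathcal L(X,U)}Me^{\alpha T}$, independent of $m$, since the exponent $a(q^{-k}-2)+2d$ vanishes by the choice of $a$. This yields the telescoping inequality
\begin{equation*}
W_m\|S(\ell_m)u_0\|_X-W_{m+1}\|S(\ell_{m+1})u_0\|_X \leq \tfrac14\theta(T)^2\|B\|_{\mathcal L(X,U)}Me^{\alpha T}\!\int_{\ell_{m+1}}^{\ell_m}\!\|BS(t)u_0\|_U\,dt.
\end{equation*}
Summing over $m\geq 1$, using that $W_m\|S(\ell_m)u_0\|_X\to 0$ (the weights vanish super-exponentially while $\|S(\ell_m)u_0\|_X\leq Me^{\alpha T}\|u_0\|_X$ stays bounded), and rearranging with $W_1=\exp(-a/((1-q)^kT^k))$ will yield \eqref{403c1} with $N:=a/(1-q)^k$, a constant depending only on $d$ and $k$.

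The main obstacle is coordinating $q$, $a$, $\varepsilon$ so that all three requirements hold at once: the telescoping identity $W_m\varepsilon=W_{m+1}$, the cancellation of the $e^{2d/L_m^k}$ blow-up in the coefficient of the $L^1$-integral (this is what forces $q^{-k}<2$, i.e., the geometric ratio cannot be taken too small), and the finiteness of the resulting $N$. The concrete choice $a=2d/(2-q^{-k})$ with any admissible $q$, for example $q=(3/4)^{1/k}$, meets all three.
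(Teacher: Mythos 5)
Your proof is correct and follows essentially the same route as the paper: the geometric partition $\ell_m=q^mT$, the $L^\infty$-in-time bound converting the $L^2$-integral to an $L^1$-integral times $\|S(\ell_{m+1})u_0\|_X$, Young's inequality, and a choice of weights making the series telescope. The only (immaterial) difference is bookkeeping: you fix the ratio $q$ first and solve for the weight exponent $a=2d/(2-q^{-k})$, whereas the paper fixes $\varepsilon=e^{-1/(\ell_m-\ell_{m+1})^k}$ and then solves for $q=((2d+1)/(2d+2))^{1/k}$; both yield \eqref{403c1} with $N=N(d,k)$.
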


\begin{proof}
Let $T>0$ and $u_0\in X$. For each $q\in(0,1)$, we define a sequence
of real numbers $\{\ell_m\}_{m\geq0}$ by
$$\ell_m=q^mT\;\;\mbox{for all}\;\; m\geq0.$$
Clearly,
\begin{equation}\label{29no4}
\ell_{m+1}-\ell_{m+2}=q(\ell_m-\ell_{m+1})\;\;\;\;
\text{and}\;\;\lim_{m\rightarrow
+\infty}\big(\ell_m-\ell_{m+1}\big)=0.
\end{equation}
By the translation, we see from \eqref{29ass} that for any $m\geq0$,
\begin{equation}\label{29no1}
\begin{split}
\|S(\ell_m)u_0\|_X&\leq
\theta(\ell_m-\ell_{m+1})e^{\frac{d}{(\ell_m-\ell_{m+1})^k}}
\Big(\int_{\ell_{m+1}}^{\ell_m}\|BS(t)u_0\| _U^2\,dt\Big)^{1/2}.
\end{split}
\end{equation}
Since
\begin{equation*}
\max_{t\in(\ell_{m+1},\ell_m)} \|BS(t)u_0\|_X\leq \|B\|_{\mathcal
L(X,U)}
 Me^{\alpha (\ell_m-\ell_{m+1})}\|S(\ell_{m+1})u_0\|_X,
\end{equation*}
the estimate (\ref{29no1}), together with \eqref{29no5}, leads to
\begin{equation}\label{29no2}
\|S(\ell_m)u_0\|_X\leq \Big(F(\ell_m-\ell_{m+1})
e^{\frac{2d}{(\ell_m-\ell_{m+1})^k}}\int_{\ell_{m+1}}^{\ell_m}
\|BS(t)u_0\|_U\,dt\Big)^{1/2} \|S(\ell_{m+1})u_0\|_X^{1/2}.
\end{equation}
Because
 $$
 F(\ell_m-\ell_{m+1})\leq F(T)\;\;\mbox{for all}\;\;  m\geq0,
 $$
by  applying the  Young inequality to \eqref{29no2}, we see  that
\begin{equation*}
\|S(\ell_m)u_0\|_X\leq \varepsilon\|S(\ell_{m+1})u_0\|_X
+\varepsilon^{-1}
F(T)e^{\frac{2d}{(\ell_m-\ell_{m+1})^k}}\int_{\ell_{m+1}}^{\ell_m}
\|BS(t)u_0\|_U\,dt\;\;\mbox{for each}\;\; \varepsilon>0.
\end{equation*}
Multiplying the above inequality by $\varepsilon
e^{-\frac{2d}{(\ell_m-\ell_{m+1})^k}}$ and then taking
$\varepsilon=e^{-\frac{1}{(\ell_m-\ell_{m+1})^k}}$ in the resulting
inequality, we obtain that
\begin{equation}\label{29no3}
 e^{-\frac{2d+1}{(\ell_m-\ell_{m+1})^k}}\|S(\ell_m)u_0\|_X
- e^{-\frac{2d+2}{(\ell_m-\ell_{m+1})^k}}\|S(\ell_{m+1})u_0\|_X \leq
F(T)\int_{\ell_{m+1}}^{\ell_m} \|BS(t)u_0\|_U\,dt.
\end{equation}
Now, we choose
$$q=\Big(\frac{2d+1}{2d+2}\Big)^{\frac{1}{k}}.$$
It is obvious that $q\in(0,1)$. Therefore, it follows from
\eqref{29no3} and \eqref{29no4} that
\begin{equation*}
e^{-\frac{2d+1}{(\ell_m-\ell_{m+1})^k}}\|S(\ell_m)u_0\|_X -
e^{-\frac{2d+1}{(\ell_{m+1}-\ell_{m+2})^k}}\|S(\ell_{m+1})u_0\|_X
\leq F(T)\int_{\ell_{m+1}}^{\ell_m} \|BS(t)u_0\|_U\,dt.
\end{equation*}
Summing the above inequality with respect to $m$ from $0$ to
$+\infty$ (the telescoping series) and noting that
$$\lim_{m\rightarrow+\infty}
e^{-\frac{2d+1}{(\ell_{m+1}-\ell_{m+2})^k}}=0\;\;\mbox{and}\;\;
\max_{t\in[0,T]}\|S(t)u_0\|_X<+\infty,$$ we derive that
\begin{equation*}
\|S(T)u_0\|_X\leq
F(T)e^{\frac{2d+1}{[(1-q)T]^k}}\int_0^T\|BS(t)u_0\|_U\,dt.
\end{equation*}
This leads to  \eqref{403c1} and completes the proof.
\end{proof}

\begin{Remark}\label{yuanyuan2.5}
It is worth mentioning that in Proposition~\ref{refined}, the pair
$(A,B)$ does not hold conditions in either Theorem~\ref{main} or
Theorem~\ref{bing1}.
\end{Remark}

We next give two  applications of Proposition~\ref{refined}, as well
as the telescoping series method presenting in the proof of this proposition.

\bigskip

\noindent Example 2.1. Let $\Omega\subset\mathbb R^n$ be a bounded
domain with a smooth boundary $\partial\Omega$, and let $\omega$ be
a nonempty open subset of $\Omega$. Consider the following
Schr\"{o}dinger equation
\begin{equation}\label{403c3}
\begin{cases}
iu_t+\Delta u
=0\;\;&\text{in}\;\;\Omega\times\mathbb R,\\
u=0\;\;&\text{on}\;\;\partial\Omega\times\mathbb R,\\
u(\cdot,0)=u_0      \;\;&\text{in}\;\;\Omega.\\
\end{cases}
\end{equation}
Under the {\it geometric optic} condition on $\Omega$ and $\omega$,
it follows from \cite[Theorem 1.3]{M1} that there exists a positive
constant $C=C(\Omega,\omega)$ such that for any $u_0\in
L^2(\Omega)$, the corresponding solution $u$ to Equation
\eqref{403c3} verifies
\begin{equation*}
\|u(\cdot,L)\|_{L^2(\Omega)} \leq Ce^{\frac{C}{L}}
\Big(\int_0^L\int_{\omega}|u(x,t)|^2\,dxdt\Big)^{1/2}\;\;\mbox{for
all}\;\; L\in(0,1].
\end{equation*}
According to Proposition~\ref{refined} (with $X=L^2(\Omega)$,
$U=L^2(\omega)$, $A=i\Delta$ and $B=\chi_\omega I$, here $I$ is the
identity on $X$ and $\chi_\omega$ is the characteristic function of
$\omega$), it holds that for each $u_0\in L^2(\Omega)$,
\begin{equation*}
\|u(\cdot,T)\|_{L^2(\Omega)}\leq Ce^{\frac{C}{T}}
\int_0^T\|u(\cdot,t)\|_{L^2(\omega)}\,dt\;\;\mbox{for all}\;\;
T\in(0,1].
\end{equation*}
 Because of the property of isometry:
 $$
 \|u(\cdot,t)\|_{L^2(\Omega)}=\|u_0\|_{L^2(\Omega)}\;\;\mbox{for all}\;\; t>0,
 $$
 we find that for each $u_0\in L^2(\Omega)$,
$$\|u_0\|_{L^2(\Omega)}\leq
Ce^{\frac{C}{T}}
\int_0^T\|u(\cdot,t)\|_{L^2(\omega)}\,dt\;\;\mbox{for all}\;\;
T\in(0,1].
$$
With regard to the  observability for the Schr\"{o}dinger equation,
we also would like to mention \cite{Ph1} and \cite[Proposition
2.2]{LT}.

\vskip 10 pt

\noindent Example 2.2. Let  $\Omega\subset\mathbb R^n$ be a bounded
domain with a $C^2$-boundary $\partial\Omega$. Consider the
parabolic equation:
\begin{equation}\label{403c2}
\begin{cases}
u_t-\mbox{div}(\vec{a}(x)\nabla u)
=0\;\;&\text{in}\;\;\Omega\times\mathbb R^+,\\
u=0\;\;&\text{on}\;\;\partial\Omega\times\mathbb R^+,\\
u(\cdot,0)=u_0,      \;\;&\text{in}\;\;\Omega,\\
\end{cases}
\end{equation}
where $\vec{a}(\cdot)\triangleq(a_{ij}(\cdot))\in
C^1(\overline\Omega; \mathbb{R}^{n\times n})$  are such that
$a_{ij}=a_{ji}$ over $\overline\Omega$ for all $i,j$ and such that
for some $0< \mu_1<\mu_2$,
$$
\mu_1\sum\limits_{i=1}^n\xi_i^2\leq \sum\limits_{i,
j=1}^na_{ij}(x)\xi_i\xi_j\leq \mu_2\sum\limits_{i=1}^n\xi_i^2
\;\;\mbox{for all}\;\; (\xi_1,\dots, \xi_n)\in \mathbb{R}^n, \
x\in\overline\Omega.
$$
Let $\omega$ be a nonempty and open subset of $\Omega$. The
following observability inequality from time intervals has been
proved (cf. \cite[Theorem 2.1]{DZZ}): There is a constant
$C=C(\Omega,\omega,\mu_1,\mu_2)\geq1$ such that for each $u_0\in
L^2(\Omega)$, the corresponding solution $u$ to Equation
\eqref{403c2} verifies
\begin{equation*}
\|u(\cdot,L)\|_{L^2(\Omega)}\leq
Ce^{\frac{C}{L}}\Big(\int_{0}^L\int_{\omega}
|u(x,t)|^2\,dxdt\Big)^{1/2}\;\;\mbox{for all}\;\; L>0.
\end{equation*}
From this, we can apply  Proposition~\ref{refined} to get that for
each $u_0\in L^2(\Omega)$, the corresponding solution $u$ to
Equation \eqref{403c2} verifies
\begin{equation*}
\|u(\cdot,T)\|_{L^2(\Omega)}\leq Ce^{\frac{C}{T}}\int_{0}^T
\|u(\cdot,t)\|_{L^2(\omega)}dt\;\;\mbox{for all}\;\; T>0.
\end{equation*}
 Then from Nash's  inequality:
\begin{equation*}
\|g\|_{L^2(\omega)}\leq C(\Omega,\omega,\widetilde{\omega})
\|g\|_{L^1(\widetilde{\omega})}^\theta \|\nabla
g\|_{L^2(\Omega)}^{1-\theta},\;\;\text{with}\;\;\theta=\frac{2}{n+2},
\;\;\mbox{for all}\;\; g\in H_0^1(\Omega),
\end{equation*}
(where  $\widetilde{\omega}$ is a nonempty open subset satisfying
$\omega\subset\subset\widetilde\omega\subset\Omega$,) H\"{o}lder's
inequality and the standard energy estimate for solutions to
Equation \eqref{403c2}:
\begin{equation*}
\|u\|_{L^2(0,T;H_0^1(\Omega))}\leq C\|u_0\|_{L^2(\Omega)},
\end{equation*}
it follows that
\begin{equation*}
\|u(\cdot,T)\|_{L^2(\Omega)}\leq \Big(Ce^{\frac{C}{T}}\int_0^T
\int_{\widetilde\omega}|u(x,t)|\,dxdt\Big)^{\theta}\|u_0\|
_{L^2(\Omega)}^{1-\theta} \;\;\text{for
all}\;\;T>0\;\;\text{and}\;\;u_0\in L^2(\Omega).
\end{equation*}
Finally, making use of the telescoping series method provided in the
proof of Proposition~\ref{refined}, we obtain the refined
observability inequality:
\begin{equation*}
\|u(\cdot,T)\|_{L^2(\Omega)}\leq
Ce^{\frac{C}{T}}\int_{0}^T\int_{\widetilde\omega}|u(x,t)|\,dxdt\;\;\text{for
all}\;\;T>0\;\;\text{and}\;\;u_0\in L^2(\Omega).
\end{equation*}
This inequality has been  built up respectively in \cite{BARBU} and
\cite{Fernandez-CaraZuazua1} by different methods from ours.

\section{Applications of Theorems~\ref{main} and \ref{bing1} }

\subsection{Time optimal control problems in  Hilbert spaces}

We first set up a time optimal control problem for a controlled
evolution equation. Let $X$ and $U$ be two Hilbert spaces (which are
identified with their dual spaces) and $A$ generate a $C_0$
semigroup $\{ S(t); t\geq 0\}$ on $X$.   Denote by $X_{-1}$ the dual
of $D(A^*)$ with respect to the pivot space $X$. Then $\{S(t); \;
t\geq 0\}$ can be extended into a $C_0$ semigroup on $X_{-1}$ (cf.
\cite[Proposition 2.10.4]{TG}). We still use $\{S(t); \; t\geq 0\}$
to denote the extended semigroup. Let $B\in\mathcal L(U,X_{-1})$  be
an admissible control operator for $\{S(t); \; t\geq 0\}$ (cf.,
e.g., \cite[Definition 4.2.1]{TG}), i.e.,  there is a $\tau>0$ such
that  $\text{Ran}\,\Psi_\tau\subset X$, where
$$\Psi_\tau f=\int_0^\tau S(\tau-t)Bf(t)\,dt,\;\;f\in L^2(0,\tau; U).$$
 The controlled equation reads:
\begin{equation}\label{a0}
\frac{dz}{dt}=Az+Bf,\;t>0,\;\;z(0)=z_0.
\end{equation}
Here, $z_0\in X$ and $f\in L^2_{loc}(\mathbb{R}^+; U)$.
Write $z(\cdot\,; f,0,z_0)\in C(\mathbb{R}^+; X)$ for the unique solution of the equation (\ref{a0}) corresponding to $f$ and $z_0$ (cf. \cite[Theorem 2.37]{Coron} or \cite[Proposition 4.2.5]{TG}). The time optimal control problem is as
$$
(TP)^M:\;\;\;\;\;\;T(M)\triangleq\inf_{ f\in\mathcal{U}_{M}} \big\{ t>0\;:\;
z(t;f,0,z_0)=z_1\big\},
$$
where $z_1\in X$ is the target which differs from $z_0$ and
$$
\mathcal{U}_{M}=\big\{f:\mathbb R^+\rightarrow
U\;\;\text{measurable}\;:\; \|f(t)\|_U\leq M,\;\text{a.e.}\,\, t>0
\big\},\;\;\mbox{with}\;\; M>0.
$$
In this problem, $T(M)$ is called the optimal time, $f^*\in
\mathcal{U}_M$ is called an optimal control if $z(T(M); f^*,0,z_0)=z_1$.
{\it We say that  the problem $(TP)^M$ holds the bang-bang property
if any optimal control $f^*$ to this problem  verifies
$\|f^*(t)\|_U=M$ for a.e. $t\in (0, T(M))$. }

The bang-bang property is very important in  studies of time optimal
control problems. For instance, the uniqueness of the optimal
control follows immediately from this property;  some equivalence of
several different kinds of optimal control problems can be derived
with the aid of this property (cf. \cite{WX}, \cite{WZ},
\cite{can2}, \cite{ZB}). The bang-bang property for the problem
$(TP)^M$ (where $X=U$ is a Banach space and $B$ is the identity on
$X$) was first established  in \cite{Fa1964} via a very special and
smart way. It was first realized in \cite{MSE} that the bang-bang
property can be derived from the observability inequality from
measurable sets in time. When the target $z_1$ is replaced by a ball
in $X$, the bang-bang property follows from Pontryagin's maximum
principle and the unique continuation property of adjoint equations.
With respect to studies on the bang-bang property, we refer the
readers to \cite{AEWZ,EMZ,F,luqi2,LT,Micu,PW2,PWZ,gengshengwang1}
(where the target is allowed to be a single point in the state
space) and \cite{kw1,KW2,WW} (where the target is a ball in the
state space).

Our main results about the problem $(TP)^M$ are as follows.

\begin{Theorem}\label{thejingjingjingcan1}
Let $A$ generate an analytic semigroup $\{S(t);t\geq0\}$ in $X$. Let
$B\in\mathcal L(U,X_{-1})$ be an admissible control operator for
$\{S(t);t\geq0\}$. Assume that  $(A^*,B^*)$ satisfies the
observability inequality from time intervals:
\begin{equation}\label{wang3.3}
\|S(L)^*\varphi_0\|^2_{X}\leq e^{\frac{d}{L^k}}
\int_0^L\|B^*S(t)^*\varphi_0\|_{U}^2\,dt\;\;\mbox{for all}\;\;
\varphi_0\in D(A^*)\;\;\mbox{and}\;\; L\in(0,1],
\end{equation}
 where positive constants $d$ and $k$ are independent of $L$ and $\varphi_0$. Then the problem
 $(TP)^M$ holds the bang-bang property.
\end{Theorem}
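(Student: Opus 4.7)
The plan is to combine a Pontryagin-type maximum principle with the observability inequality from measurable sets (Theorem~\ref{main}) applied to the adjoint pair $(A^*,B^*)$. First I would check that the hypotheses of Theorem~\ref{main} transfer to $(A^*,B^*)$: since $A$ generates an analytic semigroup on the Hilbert (hence reflexive) space $X$, so does $A^*$; admissibility of $B\in\mathcal L(U,X_{-1})$ as a control operator is equivalent, by the duality recalled in \cite[Chapter 4]{TG}, to $B^*\in\mathcal L(D(A^*),U)$ being an admissible observation operator; and (\ref{wang3.3}) is precisely (\ref{wang1.3}) for $(A^*,B^*)$. Consequently, for every $T>0$ and every measurable $E\subset(0,T)$ of positive measure there exists $C=C(E,T)>0$ such that
\begin{equation*}
\|S(T)^*\varphi_0\|_X\le C\int_E\|B^*S(t)^*\varphi_0\|_U\,dt\qquad\text{for all}\;\;\varphi_0\in D(A^*).
\end{equation*}

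Next let $T^*=T(M)$ and let $f^*\in\mathcal U_M$ be an optimal control, so that $z(T^*;f^*,0,z_0)=z_1$. Consider the convex reachable set $\mathcal R=\{z(T^*;f,0,z_0):f\in\mathcal U_M\}\subset X$; optimality of $T^*$, together with the strong continuity and admissibility ensured by the semigroup structure, forces $z_1\in\partial\mathcal R$ (otherwise a small perturbation of $f^*$ combined with a shortening of the final time would still reach $z_1$, contradicting minimality). By Hahn--Banach there is a nonzero $\varphi\in X$ with $\langle\varphi,z-z_1\rangle\le 0$ for every $z\in\mathcal R$. Writing $z-z_1=\int_0^{T^*}S(T^*-t)B(f(t)-f^*(t))\,dt$ and dualising through $B^*$ (which is legitimate because analyticity of $\{S(t)^*\}$ puts $S(T^*-t)^*\varphi\in D(A^*)$ for $t<T^*$) converts the separation inequality into
\begin{equation*}
\int_0^{T^*}\langle B^*S(T^*-t)^*\varphi,\,f(t)-f^*(t)\rangle_U\,dt\le 0\qquad\forall\,f\in\mathcal U_M.
\end{equation*}
Maximising over $f\in\mathcal U_M$ pointwise forces $\langle B^*S(T^*-t)^*\varphi,f^*(t)\rangle_U=M\|B^*S(T^*-t)^*\varphi\|_U$ for a.e.\ $t\in(0,T^*)$. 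In particular $\|f^*(t)\|_U=M$ whenever $B^*S(T^*-t)^*\varphi\neq 0$.

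Finally I would argue by contradiction: suppose $F=\{t\in(0,T^*):\|f^*(t)\|_U<M\}$ has positive measure. Then $B^*S(T^*-t)^*\varphi=0$ on $F$, so $B^*S(s)^*\varphi=0$ on the positive-measure set $\widetilde F=T^*-F\subset(0,T^*)$. Since $\varphi$ only lies in $X$ I would regularise by setting $\varphi_\e=S(\e)^*\varphi$; analyticity of $\{S(t)^*\}$ places $\varphi_\e\in D(A^*)$ for every $\e>0$ and gives $B^*S(s)^*\varphi_\e=B^*S(s+\e)^*\varphi=0$ for $s\in\widetilde F-\e$. Choosing $\e>0$ small enough that $\widetilde F\cap(\e,T^*)$ still has positive measure and applying the inequality of paragraph~1 with $T=T^*-\e$ and $E=(\widetilde F\cap(\e,T^*))-\e$ yields $S(T^*)^*\varphi=S(T^*-\e)^*\varphi_\e=0$. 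The semigroup property then propagates this to $S(t)^*\varphi=0$ for all $t\ge T^*$, and analyticity of $t\mapsto S(t)^*\varphi$ on $(0,\infty)$ forces $S(t)^*\varphi\equiv 0$ there; strong continuity as $t\to 0^+$ gives $\varphi=0$, contradicting $\varphi\neq 0$. I expect the main obstacle to be the Pontryagin step, specifically the verification that $z_1\in\partial\mathcal R$ under the admissibility hypothesis on $B$; the final contradiction is essentially an application of Theorem~\ref{main} combined with standard analyticity arguments.
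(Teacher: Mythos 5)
Your opening step (transferring the hypotheses of Theorem~\ref{main} to $(A^*,B^*)$ to get the measurable-set observability inequality \eqref{a1} for the adjoint) and your closing step (regularizing $\varphi_\e=S(\e)^*\varphi\in D(A^*)$, applying \eqref{a1} on a shifted copy of $\widetilde F$ to conclude $S(T^*)^*\varphi=0$, and then using analyticity and strong continuity to force $\varphi=0$) are both sound. The genuine gap is the Pontryagin step in between. In the infinite-dimensional, parabolic-type situations this theorem is designed for, the reachable set $\mathcal R=\{z(T^*;f,0,z_0):f\in\mathcal U_M\}$ is a bounded convex set that typically has \emph{empty} interior in $X$ (reachable states are smoother than generic elements of $X$). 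Then ``$z_1\in\partial\mathcal R$'' is vacuously true, but the supporting-hyperplane conclusion you draw from Hahn--Banach fails: a point of a convex set with empty interior need not admit any nonzero supporting functional. (Take the Hilbert cube $\{x\in\ell^2:|x_n|\le 1/n\}$ and the point $0$: any $\varphi$ with $\langle\varphi,x\rangle\le 0$ on the cube vanishes on a dense subspace, hence $\varphi=0$.) So the existence of the nonzero multiplier $\varphi$ --- i.e.\ the maximum principle for a \emph{point} target --- is precisely the hard part and cannot be obtained by soft separation; the paper itself notes that the Pontryagin route is available when the target is a ball (nonempty interior), not a single point.

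The paper's proof bypasses the maximum principle entirely. Assuming $\|f^*(t)\|_U\le M-\e$ on a set $E$ of positive measure, it invokes the duality between \eqref{a1} and null controllability with controls supported in $\hat E=E\cap(\delta_0,T(M))$: for each small $\delta>0$ there is a corrective control $f\chi_{\hat E}$ with $\|f\|_{L^\infty(\mathbb R^+;U)}\le C\|z_0-z^*(\delta)\|_X\to 0$ as $\delta\to 0$, steering the defect $z_0-z^*(\delta)$ issued at time $\delta$ to $0$ at time $T(M)$. Adding this to $f^*$ respects the constraint $\|\hat f(t)\|_U\le M$ (the slack $\e$ on $\hat E$ absorbs the correction) and, after a time shift, reaches $z_1$ in time $T(M)-\delta<T(M)$, contradicting optimality. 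If you wish to keep your adjoint-based endgame, you would first have to \emph{prove} the maximum principle for point targets, which in this setting is itself usually derived from the controllability-from-measurable-sets property that the paper uses directly.
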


\begin{Theorem}\label{speth}
Let $A$ generate a $C_0$ semigroup $\{S(t);t\geq0\}$ in $X$ and
$B\in\mathcal L(U,X)$.  Assume the pair $(A^*,B^*)$ verifies the
Hypothesis $(H)$.  Then the problem
 $(TP)^M$ holds the bang-bang property.
\end{Theorem}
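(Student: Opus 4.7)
The plan is to combine Pontryagin's maximum principle for the time-optimal problem with the observability inequality from measurable sets in Theorem~\ref{bing1}, applied to the pair $(A^*,B^*)$ which satisfies Hypothesis~$(H)$ by assumption. I would first invoke standard weak-$*$ compactness of $\mathcal U_M$ and admissibility of $B\in\mathcal L(U,X)$ to establish the existence of an optimal control $f^*\in\mathcal U_M$ with optimal time $T^*=T(M)$. Since $T^*$ is minimal, $z_1$ lies on the boundary of the reachable set $R(T^*)=\{z(T^*;f,0,z_0):f\in\mathcal U_M\}$, and Hahn--Banach separation produces $\eta\in X\setminus\{0\}$ such that, with $\varphi(t)=S(T^*-t)^*\eta$, the duality pairing yields the pointwise maximum principle
\begin{equation*}
\langle f^*(t),B^*\varphi(t)\rangle_U = M\,\|B^*\varphi(t)\|_U\quad\text{a.e.\ } t\in(0,T^*).
\end{equation*}
Whenever $B^*\varphi(t)\neq 0$, the maximizer is uniquely $f^*(t)=MB^*\varphi(t)/\|B^*\varphi(t)\|_U$, forcing $\|f^*(t)\|_U=M$; the bang-bang property therefore reduces to showing that $F:=\{t\in(0,T^*):B^*\varphi(t)=0\}$ has Lebesgue measure zero.

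Arguing by contradiction, I assume $|F|>0$ and apply Theorem~\ref{bing1} to the forward $S^*$-orbit based at $\varphi(\tau_2)$; combined with the semigroup identity $\varphi(\tau_1)=S(\tau_2-\tau_1)^*\varphi(\tau_2)$ for $0\le\tau_1<\tau_2\le T^*$ and a change of variable, this yields the interior estimate
\begin{equation*}
\|\varphi(\tau_1)\|_X \leq C\int_{\widetilde E}\|B^*\varphi(t)\|_U\,dt
\end{equation*}
valid for every measurable $\widetilde E\subset(\tau_1,\tau_2)$ of positive measure. Fixing a Lebesgue density point $\ell\in(0,T^*)$ of $F$, choosing $\tau_2>\ell$ and setting $\widetilde E=F\cap(\tau_1,\tau_2)$ (which has positive measure for every $\tau_1<\ell$ close enough to $\ell$) forces $\varphi(\tau_1)=0$. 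Sweeping $\tau_1$ over density points and invoking continuity of $\varphi$ then gives $\varphi\equiv 0$ on $[0,\ell^*]$ with $\ell^*:=\mathrm{ess\,sup}\,F$. If $\ell^*=T^*$ this immediately delivers $\eta=\varphi(T^*)=0$, the desired contradiction.

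The delicate step is the remaining case $\ell^*<T^*$, in which the adjoint vanishes only on the initial sub-interval $[0,\ell^*]$ while $\eta$ is still nontrivial; equivalently, there is an abnormal extremal with $S(T^*-\ell^*)^*\eta=0$. I would resolve this by exploiting Hypothesis~$(H)$ more carefully: decomposing $\eta=\mathcal E_\lambda\eta+\mathcal E_\lambda^{\perp}\eta$, the invariance~$(i)$ keeps $S(\cdot)^*\mathcal E_\lambda\eta$ inside $\mathbb E_\lambda$, the decay~$(ii)$ bounds $\|S(T^*-\ell^*)^*\mathcal E_\lambda^{\perp}\eta\|_X$ by $e^{-\mu(T^*-\ell^*)\lambda}\|\eta\|_X$, and the spectral inequality~$(iii)$ combined with the finite-dimensionality of $\mathbb E_\lambda$ controls $\mathcal E_\lambda\eta$ from below; sending $\lambda\to+\infty$ should then force $\eta=0$. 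An equivalent alternative dualizes Theorem~\ref{bing1} to an $L^\infty$ null controllability statement on the slack set $\{t:\|f^*(t)\|_U<M\}$, which can be used to steer $z_0$ to $z_1$ strictly before $T^*$, contradicting the optimality of $T^*$ directly.
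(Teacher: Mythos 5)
Your primary route (Pontryagin's maximum principle plus the zero-set argument) has two genuine gaps, and the argument that actually closes the proof is the one you relegate to a single sentence at the end. First, the separation step is not justified: for a point target $z_1$ in an infinite-dimensional $X$, the reachable set $R(T^*)$ typically has empty interior (under Hypothesis $(H)$ the input-to-state map factors through strongly smoothing operators), so the fact that $z_1$ lies on its boundary does \emph{not} produce a nonzero supporting functional $\eta$ via Hahn--Banach; not every boundary point of a convex set with empty interior is a support point. Establishing the maximum principle for a point target is precisely the hard step that the paper avoids --- it notes explicitly that the maximum-principle route works when the target is a \emph{ball}, and for point targets it argues differently. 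Second, even granting $\eta\neq 0$, your treatment of the degenerate case fails. Applying \eqref{hong2} for $(A^*,B^*)$ on the set $F$ (after the change of variable $s=T^*-t$) yields $\varphi(0)=S(T^*)^*\eta=0$, and propagating the vanishing toward $t=T^*$ requires \emph{backward uniqueness} for $\{S(t)^*\}$, which Hypothesis $(H)$ does not provide: the decomposition $\eta=\mathcal E_\lambda\eta+\mathcal E_\lambda^\perp\eta$ together with $(i)$--$(iii)$ only reproduces the upper bound \eqref{hong1} on $\|S(t)^*\eta\|_X$; it gives no lower bound in terms of $\|\eta\|_X$, so ``sending $\lambda\to+\infty$'' does not force $\eta=0$.

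The paper's proof is the ``equivalent alternative'' you mention in your last sentence, and it needs no maximum principle and no nontrivial multiplier. One assumes $\|f^*\|_U\leq M-\varepsilon$ on a set $E\subset(0,T(M))$ of positive measure, dualizes the observability inequality \eqref{hong2} of Theorem~\ref{bing1} (applied to $(A^*,B^*)$) into $L^\infty$ null controllability with controls supported on $\hat E=E\cap(\delta_0,T(M))$ and norm at most $C\|z_0-z^*(\delta)\|_X$, and for $\delta$ small adds such a corrector to $f^*$ to steer $z_0$ to $z_1$ in time $T(M)-\delta$ while keeping the constraint $\|\hat f(t)\|_U\leq M$; this contradicts the optimality of $T(M)$. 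This is exactly the argument written out for Theorem~\ref{thejingjingjingcan1} (with \eqref{a1} replaced by \eqref{hong2}), and it is the version you should develop; as it stands, your main line of reasoning does not go through.
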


The proofs of the above theorems are based on the null
controllability of the equation (\ref{a0}) from measurable sets in
time, which is equivalent to the observability inequality from
measurable sets in time for the dual equation of (\ref{a0}) (cf.,
e.g.,  \cite[Theorem 2.44]{Coron} or \cite[Theorem 11.2.1]{TG}). The
latter has been built up in Theorem~\ref{main} and
Theorem~\ref{bing1} from different cases. Though the above theorems
can be proved by the  standard way (cf. \cite{MSE},
\cite{gengshengwang1}), we provide the proof of
Theorem~\ref{thejingjingjingcan1} for the completeness of the
current paper.

\begin{proof}[{\it The proof of Theorem~\ref{thejingjingjingcan1}}]
Since $A$ generates an analytic semigroup  $\{S(t);t\geq0\}$ in $X$,
it follows from Theorem 5.2 of Chapter 2 and Lemma 10.2 of Chapter 1
in \cite{Pa} that the semigroup $\{S(t)^*; t\geq0\}$ generated by
$A^*$ is also analytic. Because $B\in\mathcal L(U,X_{-1})$ is an
admissible control operator for $\{S(t);t\geq0\}$, it follows from
Theorem 4.4.3 in \cite{TG} that $B^*\in\mathcal L(D(A^*),U)$ is an
admissible observation operator for $\{S(t)^*;t\geq0\}$. From these,
as well as (\ref{wang3.3}), we can apply Theorem~\ref{main} to get
the observability inequality from measurable sets in time for the
pair $(A^*, B^*)$ (i.e, the inequality (\ref{403c8}) with $(A,B)$
replaced by $(A^*,B^*)$):
{\it Given $T>0$ and $E\subset(0,T)$ of positive measure, there
exists a constant $C=C(T,E,k,d,\|B^*\|_{\mathcal L(D(A^*),U)})$ such
that
\begin{equation}\label{a1}
\|S(T)^*\varphi_0\|_{X}\leq C\int_{E}\|B^*S(t)^*\varphi_0\|_{U}\,dt\;\;
\text{for all}\;\; \varphi_0\in D(A^*).
\end{equation}}

Let $f^*$ be an optimal control for $(TP)^M$. We aim to show  that
$\|f^*(t)\|_U=M$ for a.e. $t\in(0,T(M))$. Seeking for  a
contradiction, we suppose that this did not hold. Then there would
exist an $\varepsilon>0$ and a subset $E\subset(0,T(M))$ of positive
measure such that
$$\|f^*(t)\|_U\leq M-\varepsilon\;\;\mbox{for each}\;\; t\in E. $$
Set $\delta_0=|E|/2$ and $\hat{E}=E\cap(\delta_0,T(M))$. Clearly,
$|\hat{E}|>0$.
Write $z^*(\cdot)\triangleq z(\cdot\,;f^*,0,z_0)$. Then $z^*(T(M))=z_1$.
By (\ref{a1}) and by the equivalence of the null
controllability and the observability inequality (cf., e.g.,
\cite[Theorem 2.44]{Coron} or \cite[Theorem 11.2.1]{TG}), we obtain
the null controllability from measurable sets for the pair $(A,B)$,
i.e., for each constant $\delta\in(0,\delta_0)$,  there is a control
$f$, with
$$
\|f\|_{L^\infty(\mathbb R^+;U)}\leq
C\|z_0-z^*(\delta)\|_X\;\;\mbox{for some}\;\;
C>0\;\;\mbox{independent of }\; \delta,
$$
such that $z(\cdot)\triangleq z(\cdot\,; f\chi_{\hat{E}},\delta,
z_0-z^*(\delta))$ verifies $z(T(M))=0$. Let
$\hat{f}=f^*+f\chi_{\hat{E}}$ and $w=z^*+z$. Then
$$\frac{dw}{dt}=Aw+B\hat{f}\;\;\mbox{over}\;\;(\delta, T(M)),\;\;\;\;w(\delta)=z_0,\;\;
w(T(M))=z_1.
$$ It is easy to verify that
$\|\hat{f}\|_{L^\infty(\delta,T(M);U)}\leq M$ when $\delta>0$ is
small enough. Finally, by setting $\tilde f(t)=\hat{f}(t+\delta)$
and $\tilde z(t)=w(t+\delta)$, $t\in(0,T(M)-\delta)$, we have
$$\frac{d\tilde z}{dt}=A\tilde z+B\tilde f\;\;\mbox{over}\;\; (0, T(M)-\delta),\;\;\;\;\tilde z(0)=z_0,\;\;
\tilde z(T(M)-\delta)=z_1.$$ This leads to a contradiction with the
optimality of $T(M)$ for $(TP)^M$, and completes the proof.
\end{proof}

\bigskip
\subsection{Examples}
This subsection presents some examples which are under the framework
of  Theorem~\ref{thejingjingjingcan1} or Theorem~\ref{speth}.

\subsubsection{Time optimal boundary control problem for the heat equation}
Let $\Omega\subset\mathbb R^n$ be a bounded domain with a smooth
boundary $\partial\Omega$. Let $\Gamma\subset \partial\Omega$ be a
nonempty open subset. For each $M>0$, we define
$$
\mathcal{U}_M=\big\{f:\mathbb R^+\rightarrow
L^2(\Gamma)\;\;\text{measurable}:\; \;\|f(t)\|_{L^2(\Gamma)} \leq M\
\text{for a.e.} \ t>0\big\}.
$$
The time optimal boundary control problem reads:
$$(TP)^M_1:\;\;\;\;\;T(M)\triangleq\inf_{f\in\mathcal{U}_M}\big\{t>0\;:\; y(t;f)=0\big\},$$
where $y(\cdot;f)$ solves the equation
\begin{equation}\label{a3}
\begin{cases}
y_t - \Delta y=0,\ &\text{in}\ \Omega\times \R^+,\\
y=f,\  &\text{on}\ \Gamma\times\R^+,\\
y=0,\ &\text{on}\ (\partial\Omega\setminus\Gamma)\times\R^+,\\
y(0)=y_0,\ &\text{in}\ \Omega,
\end{cases}
\end{equation}
where $y_0\in L^2(\Omega)\setminus\{0\}$ is arbitrarily fixed.

Let $X=H^{-1}(\Omega)$, $U=L^2(\Gamma)$,  $A=\Delta$, with $D(A)=H^1_0(\Omega)$,
$B=-\Delta D$, with  $D$  the Dirichlet map.
The space $L^2(\Gamma)$ is regarded as a subspace of $L^2(\partial\Omega)$
 by extending any element $f\in L^2(\Gamma)$ to be zero outside $\Gamma$.
 Then, from \cite[Proposition 10.7.1]{TG},
$A$  is positive, and consequently generates an analytic semigroup
$\{S(t);t\geq0\}$ in $X$; $B\in\mathcal L(U,X_{-1})$  is an
admissible control operator
 for $\{S(t);t\geq0\}$; and the equation (\ref{a3})
can be rewritten as
\begin{equation}\label{26jia1}
 \frac{dy}{dt}=Ay+Bf, \,t>0;\;\;  y(0)=y_0.
\end{equation}
 Using  \cite[Theorem
3.2]{Seidman-2008} (see also \cite[Remark 2]{AEWZ}) and then
modifying slightly the proof of \cite[Proposition 11.5.4]{TG},  we
can easily verify that the pair $(A,B)$ is null controllable in any
time interval  $(0, L)$, and the cost of the fast control is
$e^{C/L}$,
 where
$C=C(\Omega,\Gamma)>0$. By the equivalence of the null
controllability and the observability inequality (cf., e.g.,
\cite[Theorem 2.44]{Coron} or \cite[Theorem 11.2.1]{TG}),
$(A^*,B^*)$ verifies the
 observability inequality
\begin{equation*}
\|e^{LA^* }\varphi_0\|^2_{X}\leq e^{\frac{C}{L}}\int_0^L\|B^*e^{tA^* }\varphi_0\|_{U}^2\,dt\;\;\text{for all}\;\;
\varphi_0\in D(A^*)\;\;\text{and}\;\; L\in(0,1].
\end{equation*}
Then, one can utilize Theorem~\ref{thejingjingjingcan1} to derive the following result:
\begin{Corollary}
The problem $(TP)^M_1$ holds the bang-bang property.
\end{Corollary}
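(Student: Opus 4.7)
The plan is to apply Theorem~\ref{thejingjingjingcan1} directly, since the paragraph preceding the corollary has already assembled every abstract ingredient needed. Concretely, with the reformulation $X=H^{-1}(\Omega)$, $U=L^2(\Gamma)$, $A=\Delta$ (with $D(A)=H^1_0(\Omega)$) and $B=-\Delta D$, three hypotheses must be checked: (a) $A$ generates an analytic semigroup on $X$; (b) $B\in\mathcal L(U,X_{-1})$ is an admissible control operator for this semigroup; (c) $(A^*,B^*)$ satisfies the observability inequality (\ref{wang3.3}). Items (a) and (b) follow at once from the positivity of $A$ in $X$ via \cite[Proposition 10.7.1]{TG}, which simultaneously gives analyticity and admissibility.

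The substantive step is (c). For this I would invoke the fast-control cost estimate recalled in the excerpt: by \cite[Theorem 3.2]{Seidman-2008} together with a small modification of \cite[Proposition 11.5.4]{TG}, the pair $(A,B)$ is null controllable on every interval $(0,L)$ with control cost bounded by $e^{C/L}$ for some $C=C(\Omega,\Gamma)>0$. The standard duality equivalence between null controllability of $(A,B)$ and $L^2$-observability of $(A^*,B^*)$ (see \cite[Theorem 2.44]{Coron} or \cite[Theorem 11.2.1]{TG}) then converts this cost into an observability inequality of the shape $\|S(L)^*\varphi_0\|_X^2\leq e^{2C/L}\int_0^L\|B^*S(t)^*\varphi_0\|_U^2\,dt$ for $\varphi_0\in D(A^*)$, which is exactly (\ref{wang3.3}) with $k=1$ and $d=2C$ on the range $L\in(0,1]$.

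Having verified (a)--(c), Theorem~\ref{thejingjingjingcan1} applies verbatim and yields the bang-bang property of $(TP)^M_1$. I do not expect any real obstacle here: every non-trivial estimate (analyticity, admissibility, the fast-control cost) has been imported from standard references, and the whole corollary is essentially a packaging of those estimates into the abstract framework of Theorem~\ref{thejingjingjingcan1}. The one point worth a brief sanity check is that the target $y_1=0$ and the initial datum $y_0\in L^2(\Omega)\hookrightarrow H^{-1}(\Omega)$ are compatible with the state space $X=H^{-1}(\Omega)$, so that $(TP)^M_1$ is a genuine instance of the abstract $(TP)^M$.
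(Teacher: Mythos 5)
Your proposal is correct and follows essentially the same route as the paper: the paper's ``proof'' of this corollary is precisely the preceding paragraph, which verifies analyticity and admissibility via \cite[Proposition 10.7.1]{TG}, derives the fast-control cost $e^{C/L}$ from \cite[Theorem 3.2]{Seidman-2008} and \cite[Proposition 11.5.4]{TG}, converts it by duality into the observability inequality \eqref{wang3.3}, and then invokes Theorem~\ref{thejingjingjingcan1}. The only cosmetic difference is your explicit constant $d=2C$, which the paper absorbs into a relabelled $C$.
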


\bigskip

\subsubsection{The $3$-dimensional Stokes system with $2$ scalar controls}

Assume that $\Omega\subset\mathbb R^3$ is a bounded domain with a
smooth boundary $\partial\Omega$. Let $\omega\subset \Omega$ be a
nonempty open subset with its characteristic function $\chi_\omega$.
Treat $L^2(\omega)$  as a subspace of $L^2(\Omega)$ by extending
functions in $L^2(\omega)$ to be zero outside $\omega$. Consider the
controlled Stokes system
\begin{equation}\label{405c2}
\begin{cases}
y_t-\Delta y+\nabla p= f,\;\;&\text{in}\;\;\Omega\times\mathbb R^+,\\
\text{div}\,y=0,\;\;&\text{on}\;\;\Omega\times\mathbb R^+,\\
y=0,\;\;&\text{in}\;\;\partial\Omega\times\mathbb R^+,\\
y(\cdot,0)=y_0,\;\;&\text{in}\;\;\Omega,
\end{cases}
\end{equation}
where $y_0$ is arbitrarily fixed in the space:
$$
L^2_\sigma(\Omega)\triangleq\{y\in (L^2(\Omega))^3:
\;\text{div}\,y=0,\,y\cdot\nu=0\;\text{on}\;\partial\Omega\},
$$
and
$f$ is  taken from the control constraint set:
\begin{multline*}
\mathcal U_M\triangleq\Big\{f=(0,f_2,f_3)\in L^\infty(\mathbb
R^+;(L^2(\omega))^3):\; \| f(t)\|_{(L^2(\omega))^3}\leq
M \;\mbox{for a.e.}\;t>0  \Big\},
\end{multline*}
with $M>0$. The time optimal control problem reads:
\begin{equation*}
(TP)^M_2:\;\;\;\;\;T(M)\triangleq \inf_{f\in\mathcal{U}_M}\big\{t>0:\;y(t;f)=0\big\},
\end{equation*}
where $y(\cdot;f)$ is the solution to Equation~\eqref{405c2} corresponding to the control $f$.

Write $X=L^2_\sigma(\Omega)$ and $U=\{0\}\times L^2(\omega)\times
L^2(\omega)$. Define the operator  $A$ on $X$ by
\begin{equation*}
\begin{cases}
D(A)=\big(H^2(\Omega)\bigcap H_0^1(\Omega)\big)^3\bigcap L^2_\sigma(\Omega),\\
Ay=P(\Delta y)\;\;\text{for all}\;\; y\in D(A),
\end{cases}
\end{equation*}
where $P$ is the Helmholtz projection operator from $(L^2(\Omega))^3$ into $X$ (cf., e.g.,
\cite[Chapter 3]{Sohr}). Let $B\in \mathcal {L}(U,X)$ be defined by
$Bf=Pf$ for all $f\in U$ (i.e., $B$ is the composition of the Helmholtz projection operator and the imbedding of $U$ into $(L^2(\Omega))^3$).
Clearly,  $A$ is self-adjoint and
generates an analytic semigroup in $X$ (cf., e.g., \cite{giga1981});
$B$ is an admissible control operator for $\{e^{tA};t\geq0\}$ and $B^*: X\rightarrow U$
is given by
$$
B^*\varphi=(0,\chi_\omega \varphi_2,\chi_\omega \varphi_3)\;\;\mbox{
for all}\;\;\varphi=(\varphi_1,\varphi_2,\varphi_3)\in X;
$$
and the
equation~\eqref{405c2} can be rewritten as (cf., e.g., \cite[Chapter
4, Section 1.5]{Sohr})
\begin{equation*}
\frac{dy}{dt}=Ay+Bf,\;\;t>0.\ \
y(0)=y_0.
\end{equation*}
Meanwhile, it follows from \cite[Theorem 1]{CG} that there exists a
positive constant $C=C(\Omega,\omega)$ such that for each
$L\in(0,1]$,
\begin{equation*}
\sum_{j=1}^3\int_\Omega|\varphi_j(x,L)|^2\,dx
\leq e^{\frac{C}{L^9}}
\int_{0}^L\int_\omega
|\varphi_2(x,t)|^2+|\varphi_3(x,t)|^2\,dxdt\;\;\text{for all}\;\; \varphi_0\in L^2_\sigma(\Omega),
\end{equation*}
where $\varphi=(\varphi_1,\varphi_2,\varphi_3)$ solves the equation
\begin{equation*}\label{stokes}
\begin{cases}
\varphi_t-\Delta \varphi+\nabla p=0,\;\;&\text{in}\;\;\Omega\times(0,L),\\
\text{div}\,\varphi=0,\;\;&\text{in}\;\;\Omega\times(0,L),\\
\varphi=0,\;\;&\text{in}\;\;\partial\Omega\times(0,L),\\
\varphi(\cdot,0)=\varphi_0.
\end{cases}
\end{equation*}
In other words, the pair $(A^*,B^*)$ satisfies observability inequality:
\begin{equation*}
\|e^{LA^*}\varphi_0\|^2_X\leq e^{\frac{C}{L^9}}\int_0^L\|B^*e^{tA^*}\varphi_0\|_U^2\,dt
\;\;\mbox{for all}\;\; \varphi_0\in X\;\;\text{and}\;\; L\in(0,1].
\end{equation*}
Therefore, we can apply Theorem~\ref{thejingjingjingcan1} to get
\begin{Corollary}
 Problem $(TP)^M_2$  has the bang-bang property.
\end{Corollary}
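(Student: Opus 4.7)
The plan is to reduce the claim to a direct application of Theorem~\ref{thejingjingjingcan1} for the triple $(X,A,B)$ built in the paragraph preceding the corollary. All of the heavy analytic lifting has already been done; what remains is to line up the three hypotheses of that theorem against what has been collected for the Stokes system with two scalar controls.

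First I would verify the semigroup-theoretic hypotheses. Since the Stokes operator $A$ with Dirichlet data on $X=L^2_\sigma(\Omega)$ is self-adjoint and generates an analytic semigroup (cf.\ \cite{giga1981}), one has $A^*=A$ and $\{e^{tA^*};t\geq 0\}$ is the same analytic semigroup on $X$. Because $B\in\mathcal L(U,X)$ is bounded, it is automatically an admissible control operator for $\{e^{tA};t\geq 0\}$ in the sense of \cite[Definition 4.2.1]{TG}, and $B^*\in\mathcal L(X,U)$ is the explicit localization onto the second and third components over $\omega$ recalled above. Thus the analyticity and admissibility hypotheses of Theorem~\ref{thejingjingjingcan1} are met.

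Second I would match the observability inequality from time intervals (\ref{wang3.3}) against the estimate borrowed from \cite[Theorem~1]{CG}: taking $d=C(\Omega,\omega)$ and $k=9$, the displayed inequality
\[
\|e^{LA^*}\varphi_0\|_X^2\leq e^{C/L^9}\int_0^L\|B^*e^{tA^*}\varphi_0\|_U^2\,dt,\quad L\in(0,1],\ \varphi_0\in X,
\]
is exactly (\ref{wang3.3}), restricted \emph{a fortiori} to $\varphi_0\in D(A^*)\subset X$. With all three hypotheses of Theorem~\ref{thejingjingjingcan1} in force, its conclusion immediately yields the bang-bang property of $(TP)^M_2$.

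The only obstacle here is bookkeeping rather than analytic: one must check that the time range, the admissibility framework, and the exponent $k$ in (\ref{wang3.3}) are all compatible. Since the Stokes observability estimate of \cite{CG} is stated on $(0,1]$ with exponent $9$, it fits the slot $L\in(0,1]$ with $k>0$ required by (\ref{wang3.3}) without any further work; no telescoping or extrapolation (such as Proposition~\ref{refined}) is needed, because the hypothesis of Theorem~\ref{thejingjingjingcan1} is already an $L^2$-observability from time intervals.
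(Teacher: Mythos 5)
Your proposal is correct and follows essentially the same route as the paper: identify the functional setting $(X,U,A,B)$, invoke the analyticity and self-adjointness of the Stokes operator, note the admissibility of the bounded control operator, translate the Coron--Guerrero estimate into the observability inequality \eqref{wang3.3} with $k=9$, and apply Theorem~\ref{thejingjingjingcan1}. No discrepancies to report.
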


\bigskip

\subsubsection{Parabolic equations associated with second order elliptic operators}
Let $\Omega\subset\mathbb R^n$ be a bounded domain with a smooth
boundary $\partial\Omega$, and $\omega$ be a nonempty open subset of
$\Omega$. Regard $L^2(\omega)$ as a subspace of $L^2(\Omega)$ by
extending functions in $L^2(\omega)$ to be zero outside $\omega$.
Consider the second order elliptic differential operator
$$ \mathbb L y=\sum_{i,j=1}^n\text{div}\big(a_{ij}(x)\nabla y\big)
+\sum_{i=1}^nb_i(x)\partial_{x_i}y+c(x)y.$$
Here,  all the coefficients belong to $C^2(\overline{\Omega})$; $a_{ij}(x)=a_{ji}(x)$,
 when $1\leq i,j\leq n$ and $ x\in\Omega$; and
 \begin{equation*}
\sum_{i,j=1}^na_{ij}(x)\xi_i\xi_j\geq \theta|\xi|^2 \;\text{for
all}\;\;x\in\Omega,\xi\in\mathbb R^n,\;\;\mbox{with}\;\; \theta>0.
\end{equation*}
The controlled parabolic equation is as
\begin{equation}\label{c-j1}
\begin{cases}
y_t-\mathbb Ly= f,\;\;&\text{in}\;\;\Omega\times\mathbb R^+,\\
y=0,\;\;&\text{on}\;\;\partial\Omega\times\mathbb R^+,\\
y(\cdot,0)=y_0,\;\;&\text{in}\;\;\Omega,
\end{cases}
\end{equation}
where  $y_0\in L^2(\Omega)\setminus\{0\}$ and $f$ is a control function taken from
$$\mathcal U_M\triangleq\big\{f\in L^\infty(\mathbb R^+;L^2(\omega)):\,
\| f(t)\|_{L^2(\omega)}\leq M,\, \mbox{a.e.} \,\ t>0
\big\},\;\;\mbox{with}\;\; M>0.
$$
 We are concerned with the time optimal control problem
$$(TP)_3^M:\;\;\;\;\quad\;T(M)\triangleq\inf_{f\in\mathcal U_M}\big\{t>0:
y(t;f)=0\big\},
$$
where $y(\cdot,f)$ is the solution to Equation~\eqref{c-j1} corresponding to the control $f$.

Let $X=L^2(\Omega)$ and $U=L^2(\omega)$. Define the operator $A$ on $X$ by setting
\begin{equation*}
\begin{cases}
D(A)=H^2(\Omega)\bigcap H_0^1(\Omega),\\
Ay= \mathbb Ly\;\;\mbox{for all}\;\; y\in D(A).
\end{cases}
\end{equation*}
Let $B\in \mathcal L(U,X)$ be defined by $Bf=f$ for all $f\in U$ (i.e., $B$ is the imbedding of $U$ into $X$).
Clearly,  $A$ generates an analytic semigroup in $L^2(\Omega)$ (cf., e.g., \cite[Chapter 7, Theorem 3.5]{Pa});
$B$ is an admissible
control operator for $\{e^{tA};t\geq0\}$ and $B^*: X\rightarrow U$ is given by $B^*\varphi=\chi_\omega\varphi$ for all $\varphi\in X$
(i.e., $B^*$ is the restriction from $X$ to $U$); and  Equation~\eqref{c-j1} can be rewritten as
\begin{equation*}
\frac{dy}{dt}=Ay+Bf,\ t>0, \ y(0)=y_0.
\end{equation*}
 Meanwhile, according to \cite[Theorem 2.1]{DZZ},  there exists a constant $C=C(\Omega,\omega)>0$ such that for each $L\in (0, 1]$,
\begin{equation*}
\|e^{LA^*}\varphi_0\|^2_X\leq
e^{\frac{C}{L}}\int_{0}^L\|B^*e^{tA^*}\varphi_0\|_U^2dt,\;\;\text{when}\;\;
\varphi_0\in X.
\end{equation*}
Hence, we have  the following consequence of
Theorem~\ref{thejingjingjingcan1}.
\begin{Corollary}
Any time optimal control $f^*$ of Problem $(TP)^M_3$ verifies the bang-bang property: $\| f^*(t)\|_{L^2(\omega)}=M$, a.e. $t\in(0,T(M))$.
\end{Corollary}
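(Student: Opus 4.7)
The plan is to obtain this corollary as an immediate specialization of Theorem~\ref{thejingjingjingcan1}, since the subsubsection preceding the statement has already assembled all of the abstract data $(X,U,A,B)$ in a form that matches the hypotheses of that theorem. Concretely, I would proceed by checking the three required conditions in turn: (i) $A$ generates an analytic semigroup on $X=L^2(\Omega)$, (ii) $B$ is an admissible control operator on $X_{-1}$ for $\{e^{tA};t\geq0\}$, and (iii) $(A^*,B^*)$ satisfies the observability inequality from time intervals \eqref{wang3.3} on $(0,1]$.

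For (i), this is a classical fact for uniformly elliptic second order operators with smooth coefficients and Dirichlet boundary conditions, invoked in the paragraph preceding the statement via \cite[Chapter 7, Theorem 3.5]{Pa}. For (ii), note that $B\in\mathcal L(U,X)$ is nothing more than the extension-by-zero inclusion, so admissibility is automatic. For (iii), the estimate from \cite[Theorem 2.1]{DZZ} quoted just above the corollary gives
\[
\|e^{LA^*}\varphi_0\|_X^2 \leq e^{C/L}\int_0^L \|B^*e^{tA^*}\varphi_0\|_U^2\,dt, \quad L\in(0,1], \ \varphi_0\in X,
\]
which is exactly \eqref{wang3.3} with constants $d=C$ and $k=1$. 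Once these three items are recorded, Theorem~\ref{thejingjingjingcan1} applied to $(A,B)$ gives the bang-bang property for the corresponding time optimal control problem, which is the statement of the corollary.

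If there were a genuinely nontrivial step in this argument, it would be the derivation of the interval observability inequality \eqref{wang3.3} itself, since this rests on global Carleman estimates for the elliptic operator $\mathbb{L}^*$ with the lower order terms $b_i,c$ absorbed in the usual way. However, that work is already fully carried out in \cite{DZZ}, so on the level of the present paper the proof reduces to recording the identifications $X=L^2(\Omega)$, $U=L^2(\omega)$, $B=$~inclusion, and citing Theorem~\ref{thejingjingjingcan1}. I would therefore write the proof as a single short paragraph consisting of these verifications followed by the application of the theorem.
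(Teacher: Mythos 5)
Your proposal is correct and follows essentially the same route as the paper: the text preceding the corollary records exactly the identifications $X=L^2(\Omega)$, $U=L^2(\omega)$, $A=\mathbb L$ generating an analytic semigroup, $B$ the (automatically admissible) imbedding, and the interval observability inequality from \cite[Theorem 2.1]{DZZ} with $k=1$, after which the corollary is read off from Theorem~\ref{thejingjingjingcan1}. Nothing further is needed.
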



\subsubsection{Degenerate parabolic equations associated with the Grushin operator}
 Let $\gamma\in(0,1)$, $\Omega=(-1,1)\times(0,1)$ and $\omega=(a,b)\times(0,1)$, $0<a<b<1$. Treat $L^2(\omega)$  as a subspace
 of $L^2(\Omega)$ by extending functions in $L^2(\omega)$ to be zero outside $\omega$.  Consider the controlled system
\begin{equation}\label{405c9}
\begin{cases}
z_t-\partial_x^2z-|x|^{2\gamma}\partial_y^2z= f,\;\;&\text{in}\;\;
\Omega\times\mathbb R^+,\\
z=0,\;\;&\text{on}\;\;\partial\Omega\times\mathbb R^+,\\
z(x,y,0)=z_0,\;\;&\text{in}\;\;\Omega,
\end{cases}
\end{equation}
where  $z_0\in L^2(\Omega)\setminus\{0\}$ and the control function $f$ is taken from
$$\mathcal U_M\triangleq\big\{f\in L^\infty(\mathbb R^+;L^2(\omega)):\,
\| f(t)\|_{L^2(\omega)}\leq M\,\,\mbox{for a.e.}\;\; t>0
\big\},\;\;\mbox{with}\;\; M>0.
$$
 We are interested in the time optimal control problem
$$(TP)^M_4:\;\;\;\;\quad\;T(M)\triangleq\inf_{f\in\mathcal U_M}\big\{t>0:
z(t;f)=0\big\},
$$
where $z(\cdot;f)$ is the solution to Equation~\eqref{405c9} corresponding to the control $f$.

We next recall the well-posedness of Equation~\eqref{405c9} (see
\cite[Section 2.1]{BCG}). Let
$$(g,h)\triangleq \int_{\Omega}\big(\partial_xg\partial_xh+|x|^{2\gamma}\partial_yg
\partial_yh\big)\,dxdy
\;\;\;\text{and}\;\;\;|g|_{V}\triangleq(g,g)^{\frac12},\; g,h\in
C_0^\infty(\Omega).$$ Set
$V=\overline{C_0^\infty(\Omega)}^{|\cdot|_V}$. Define a bilinear
form $a(\cdot, \cdot)$ over $V$ by
$$a(g,h)=-(g,h)\;\;\text{for all}\;\; g,h\in V,$$
and  an operator $A$ on   $X\triangleq L^2(\Omega)$  by
\begin{equation*}
\begin{cases}
D(A)=\big\{g\in V\,:\, \text{there is a constant}\; C\;\text{such that}\; |a(g,h)|\leq C \|h\|_{L^2(\Omega)}\;\,\text{for all}\; h\in V
\big\},\\
\lg Ag,h\rg_{L^2(\Omega)}=a(g,h)\;\;\text{for all}\;\;g\in D(A)\;\;\text{and}\;\;h\in V.
\end{cases}
\end{equation*}
Let  $U=L^2(\omega)$. Define  $B\in\mathcal L(U,X)$  by $Bf=f$ for
all $f\in U$. Then, $A$ is a self-adjoint operator and generates an
analytic semigroup in $X$; $B$ is an admissible control operator for
$\{e^{tA}; t\geq 0\}$ and $B^*:X\rightarrow U$ is given by
$B^*\varphi=\chi_\omega \varphi$ for all $\varphi\in X$; and
Equation~\eqref{405c9} can be rewritten as
\begin{equation*}
\frac{dz}{dt}=Az+Bf,\ t>0, \ z(0)=z_0.
\end{equation*}
Meanwhile, from Proposition 6 and from  the proof of Proposition 8
in \cite{BCG},  there is a constant $C=C(\Omega,\omega)>0$ such that
for any $L\in(0,1]$,
\begin{equation*}
\int_{\Omega}|\varphi(x,y,L)|^2\,dxdy
\leq e^{C{L^{-\frac{1+\gamma}{1-\gamma}}}}\int_{0}^L\!\int_\omega |\varphi(x,y,t)|^2\,dxdydt\;\;\mbox{for all}\;\; \varphi_0\in L^2(\Omega),
\end{equation*}
where $\varphi$ solves
\begin{equation*}
\begin{cases}
\varphi_t-\partial_x^2\varphi-|x|^{2\gamma}\partial_y^2\varphi=0,\;\;&\text{in}\;\;
\Omega\times(0,L),\\
\varphi=0,\;\;&\text{on}\;\;\partial\Omega\times(0,L),\\
\varphi(x,y,0)=\varphi_0,\;\;&\text{in}\;\;\Omega.
\end{cases}
\end{equation*}
In other words, $(A^*,B^*)$ satisfies  observability inequality
\begin{equation*}
\|e^{LA^*}\varphi_0\|_X^2\leq e^{C{L^{-\frac{1+\gamma}{1-\gamma}}}}\int_0^L
\|B^*e^{tA^*}\varphi_0\|_U^2\,dt\;\;\mbox{for all}\;\; \varphi_0\in X \;\;\mbox{and}\;\; L\in(0,1].
\end{equation*}
Therefore, one can apply Theorem~\ref{thejingjingjingcan1} to deduce the following corollary:
\begin{Corollary}
  Problem $(TP)^M_4$  holds the bang-bang property.
\end{Corollary}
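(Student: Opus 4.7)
The plan is to obtain this corollary as a direct application of Theorem~\ref{thejingjingjingcan1} to the pair $(A^*, B^*)$ associated with the Grushin problem $(TP)^M_4$. All the structural ingredients have been assembled in the paragraphs immediately preceding the statement, so the proof reduces to verifying, line by line, that the three hypotheses of Theorem~\ref{thejingjingjingcan1} are met, and then invoking that theorem.

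First I would check that $A$ generates an analytic semigroup on $X = L^2(\Omega)$ and that $A^* = A$. The operator $A$ has been built from the symmetric bilinear form $a(\cdot,\cdot)$ on $V$, hence is self-adjoint, and the text already records that $\{e^{tA};t\geq 0\}$ is analytic. Consequently $\{e^{tA^*};t\geq 0\} = \{e^{tA};t\geq 0\}$ is also analytic, and the first hypothesis of Theorem~\ref{thejingjingjingcan1} holds. Next I would verify that $B^* \in \mathcal L(D(A^*),U)$ is an admissible observation operator for $\{e^{tA^*};t\geq 0\}$: since $B \in \mathcal L(U,X)$ is bounded, so is $B^*\in \mathcal L(X,U)$, and the admissibility condition \eqref{admi} follows immediately from the uniform boundedness of $e^{tA^*}$ on $[0,\tau]$.

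The remaining step is to put the recalled estimate
$$
\|e^{LA^*}\varphi_0\|_X^2 \leq e^{C L^{-\frac{1+\gamma}{1-\gamma}}} \int_0^L \|B^*e^{tA^*}\varphi_0\|_U^2\,dt, \qquad L\in(0,1],
$$
into the form prescribed by \eqref{wang3.3}. This is done simply by choosing $d = C(\Omega,\omega)$ and $k = (1+\gamma)/(1-\gamma)$, which is a positive constant since $\gamma\in(0,1)$. With this identification of $(d,k)$, the hypothesis \eqref{wang3.3} of Theorem~\ref{thejingjingjingcan1} is satisfied on $(0,1]$, which is exactly what the theorem requires.

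The main obstacle, if any, is purely a bookkeeping one: one should double-check that the weak formulation of $A$ via the form $a(\cdot,\cdot)$ does give a genuinely self-adjoint operator on $X$ and that the abstract equation $dz/dt = Az + Bf$ indeed coincides with \eqref{405c9} in the sense of mild solutions. Both facts are classical once the setting from \cite{BCG} is adopted, and have been stated in the text surrounding Equation~\eqref{405c9}. Once these identifications are made, applying Theorem~\ref{thejingjingjingcan1} yields the bang-bang property for $(TP)^M_4$, completing the proof.
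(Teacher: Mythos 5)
Your proposal is correct and follows exactly the route the paper intends: the text preceding the corollary assembles precisely the three ingredients you list (self-adjointness and analyticity of $\{e^{tA}\}$, boundedness hence admissibility of $B$ with $B^*\varphi=\chi_\omega\varphi$, and the observability inequality from \cite{BCG} recast as \eqref{wang3.3} with $k=(1+\gamma)/(1-\gamma)$), and the corollary is then a direct invocation of Theorem~\ref{thejingjingjingcan1}. The paper gives no further proof beyond this verification, so your argument matches it in substance and in detail.
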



\bigskip

\subsubsection{Parabolic equations with coefficients jumping  at an interface}
Let $\Omega$ be a smooth and bounded domain in $\mathbb R^n$
($n\geq2$) and  $\omega\subset\Omega$ be a nonempty open subset.
Regard $L^2(\omega)$  as a subspace of $L^2(\Omega)$ by extending
functions in $L^2(\omega)$ to be zero in $\Omega\setminus\omega$.
Define an operator $\mathbb L$ in $L^2(\Omega)$ by
 $$
 \mathbb L = \text{div}(a(x)\nabla),
 $$
 with
$$
D(\mathbb L)=\{u\in H_0^1(\Omega)\,:\, \text{div}(a(x)\nabla u)\in
L^2(\Omega)\},
$$
 where $a$ verifies
 $$
 0<a_1\leq a(x)\leq a_2<+\infty \;\;\mbox{over}\;\;
 \Omega.
 $$
 The coefficient
$a(\cdot)$ is further assumed smooth apart from across an interface
$\Gamma$, where it may jump. The interface $\Gamma$ is the boundary
of a smooth open subset of $\Omega$.

Consider the following time optimal
control problem:
$$(TP)_5^M:\;\;\;\;\quad\;T(M)\triangleq\inf_{f\in\mathcal U_M}\big\{t>0:
y(t;f)=0\big\},
$$
where
$$
\mathcal U_M\triangleq\big\{f\in L^\infty(\mathbb
R^+;L^2(\omega)):\, \| f(t)\|_{L^2(\omega)}\leq M,\, \text{a.e.}\,\
t>0 \big\},\;\;\mbox{with}\;\; M>0,
$$
 and $y(\cdot,f)$ is the solution to
\begin{equation*}
\begin{cases}
y_t-\mathbb Ly= f,\;\;&\text{in}\;\;\Omega\times\mathbb R^+,\\
y=0,\;\;&\text{on}\;\;\partial\Omega\times\mathbb R^+,\\
y(\cdot,0)=y_0,\;\;&\text{in}\;\;\Omega,
\end{cases}
\end{equation*}
with $y_0\in L^2(\Omega)\setminus\{0\}$.

 Let $\{\lambda_m\}_{m\geq1}$, sorted in an increasing sequence, and $\{e_m\}_{m\geq1}$ be the sets of the
  eigenvalues and of the associated $L^2(\Omega)$-normalized eigenfunctions of the operator $-\mathbb L$, respectively.
According to  \cite[Theorem 1.2]{RR}, there exists a constant
$N=N(\Omega,\omega)\geq1$ such that  the spectral inequality
\begin{equation}\label{sp}
\|g\|_{L^2(\Omega)}\leq Ne^{N\sqrt{\lambda_m}}\|\chi_\omega g\|_{L^2(\omega)},
\end{equation}
holds for all $m\in\mathbb N$ and every function $g\in \mathbb E_{\lambda_m}\triangleq \text{span}\{e_j\,:\,j\leq m\}$.

Let $X=L^2(\Omega)$, $U=L^2(\omega)$ and $A=\mathbb L$. Define
$B\in\mathcal L(U,X)$  by $Bf=f$ for all $f\in U$. From \eqref{sp},
it is easy to see that Hypothesis $(H)$ (in Theorem~\ref{speth})
holds in this case. Hence, we have the following consequence of
Theorem~\ref{speth}.

\begin{Corollary}
 Problem $(TP)^M_5$ holds the bang-bang property.
\end{Corollary}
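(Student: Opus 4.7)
The plan is to verify Hypothesis $(H)$ for the pair $(A^*,B^*)$ arising in the setup of $(TP)^M_5$ and then invoke Theorem~\ref{speth}. Since $a(\cdot)$ is real-valued and $A=\mathbb L$ is realized through the symmetric bilinear form $\int_\Omega a(x)\nabla u\cdot\nabla v\,dx$ with Dirichlet data, $A$ is self-adjoint in $X=L^2(\Omega)$; hence $A^*=A$ and $S(t)^*=S(t)$. Moreover $B\in\mathcal L(U,X)$ is the extension-by-zero map, so $B^*:X\to U$ is the restriction $B^*\varphi=\chi_\omega\varphi$. By standard elliptic theory $-\mathbb L$ has a compact resolvent, a discrete spectrum $0<\lambda_1\le\lambda_2\le\cdots\to+\infty$ and a complete orthonormal eigenbasis $\{e_m\}_{m\ge1}$ in $L^2(\Omega)$; this gives the parameters $\{\lambda_m\}$ and the spaces $\mathbb E_{\lambda_m}=\mathrm{span}\{e_j:j\le m\}$ already used in \eqref{sp}.

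Next I would check the three items of Hypothesis $(H)$ one by one. For item $(i)$, every $\mathbb E_{\lambda_m}$ is a finite-dimensional invariant subspace of $A$, so the functional calculus yields $S(t)\mathbb E_{\lambda_m}\subset\mathbb E_{\lambda_m}$ for all $t\ge0$. For item $(ii)$, writing an element of $\mathbb E_{\lambda_m}^\bot$ as $g=\sum_{j>m}c_j e_j$ and using self-adjointness gives
\begin{equation*}
\|S(t)g\|_X^2=\sum_{j>m}e^{-2\lambda_j t}|c_j|^2\le e^{-2\lambda_{m+1}t}\|g\|_X^2\le e^{-2\lambda_m t}\|g\|_X^2,
\end{equation*}
so $(ii)$ holds with $\mu=1$. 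For item $(iii)$, since $B^*f=\chi_\omega f$ we have $\|B^*f\|_U=\|\chi_\omega f\|_{L^2(\omega)}$, and the Rousseau--Robbiano spectral inequality \eqref{sp} reads
\begin{equation*}
\|f\|_X\le N e^{N\sqrt{\lambda_m}}\|B^*f\|_U\quad\text{for all}\ f\in\mathbb E_{\lambda_m},
\end{equation*}
which is precisely item $(iii)$ with exponent $\gamma=1/2\in(0,1)$ and the same constant $N$.

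Once Hypothesis $(H)$ is in hand for $(A^*,B^*)$, Theorem~\ref{speth} applies verbatim and yields the bang-bang property for $(TP)^M_5$. I anticipate no substantive obstacle: all three items reduce to standard spectral facts for a self-adjoint elliptic operator, and the only nontrivial input, the spectral inequality \eqref{sp}, is precisely the content of \cite[Theorem 1.2]{RR} invoked in the statement. The only small matter to be careful about is that the increasing family in Hypothesis $(H)$ must have strictly positive first parameter; this is guaranteed by the Poincaré inequality, which forces $\lambda_1>0$ under homogeneous Dirichlet conditions.
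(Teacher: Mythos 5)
Your proposal is correct and follows exactly the route the paper intends: the paper simply asserts that Hypothesis $(H)$ for the self-adjoint pair $(A^*,B^*)=(A,\chi_\omega\cdot)$ ``is easy to see'' from the spectral inequality \eqref{sp} and then cites Theorem~\ref{speth}, while you supply the (standard and correct) verification of items $(i)$--$(iii)$ with $\mu=1$ and $\gamma=1/2$. No gap; this is the same argument with the routine details written out.
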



\end{document}